\numberwithin{equation}{section}
\newtheorem{lemma}{Lemma}[section]
\newtheorem{theorem}[lemma]{Theorem}
\newtheorem{corollary}[lemma]{Corollary}
\newtheorem{definition}[lemma]{Definition}
\newtheorem{proposition}[lemma]{Proposition}
\newtheorem{example}[lemma]{Example}
\DeclareMathOperator{\graph}{Gr}
\def\X{\mathbb{X}}
\def\U{\mathbb{U}}
\def\K{\mathbb{K}}
\def\N{\mathbb{N}}
\def\Y{\mathbb{Y}}
\def\S{\mathbb{S}}
\def\R{\mathbb{R}}
\title{Berge's Maximum Theorem for Noncompact Image Sets}
\begin{document}
\date{}
\maketitle

\begin{center}
 Eugene~A.~Feinberg\footnote{{This research was partially supported by NSF grants CMMI-0928490 and CMMI-1335296}\\ Department of Applied Mathematics and
Statistics,
 State University of New York at Stony Brook,
Stony Brook, NY 11794-3600, USA, eugene.feinberg@sunysb.edu}, \
Pavlo~O.~Kasyanov\footnote{Institute for Applied System Analysis,
National Technical University of Ukraine ``Kyiv Polytechnic
Institute'', Peremogy ave., 37, build, 35, 03056, Kyiv, Ukraine,\
kasyanov@i.ua.},
 and Mark Voorneveld\footnote{Department of Economics, Stockholm School of Economics, Box 6501, 113 83 Stockholm, Sweden, mark.voorneveld@hhs.se} \\

\smallskip
\end{center}

\begin{abstract}
This note generalizes Berge's maximum theorem to noncompact image
sets.  It is also clarifies the results from E.A. Feinberg, P.O.
Kasyanov, N.V. Zadoianchuk, ``Berge's theorem for noncompact image
sets," J. Math. Anal. Appl. 397(1)(2013), pp. 255--259 on the
extension to noncompact image sets of another Berge's theorem,
that states semi-continuity of value functions. Here we explain
that the notion of a $\K$-inf-compact function introduced there is
applicable to metrizable topological spaces and to more general
compactly generated topological spaces. For Hausdorff topological
spaces we introduce the notion of a $\K\N$-inf-compact function
($\N$ stands for ``nets" in $\K$-inf-compactness), which coincides
with $\K$-inf-compactness for compactly generated and, in
particular, for metrizable topological spaces.

\end{abstract}

\section{Introduction} Let $\X$ and $\Y$ be
Hausdorff topological spaces, $u:\X\times \Y\to
\overline{\mathbb{R}}=\mathbb{R}\cup\{\pm\infty\}$ and
$\Phi:\X\to\S(\Y)$, where $\S(\Y):=2^{\Y}\setminus \{\emptyset\}$
is \textit{the family of all nonempty subsets} of the set $\Y.$
Consider a value function
\begin{equation}\label{eq:lm10}
v(x):=\inf\limits_{y\in {\Phi}(x)}u(x,y),\qquad 
x\in\X.
\end{equation}

  A set-valued mapping ${F}:\X
\to \S(\Y)$ is \textit{upper semi-continuous} at $x\in\X$ if, for
any neighborhood $\mathcal{G}$ of the set $F(x)$, there is a
neighborhood of $x$, say $\mathcal{O}(x)$, such that
$F(y)\subseteq \mathcal{G}$ for all $y\in \mathcal{O}(x)$; a
set-valued mapping ${F}:\X \to \S(\Y)$ is \textit{lower
semi-continuous} at $x\in\X$ if,  for any open set $\mathcal{G}$
with $F(x) \cap \mathcal{G} \neq \emptyset$, there is a
neighborhood of $x$, say $\mathcal{O}(x)$, such that if $y\in
\mathcal{O}(x)$, then
$F(y)\cap \mathcal{G}\ne\emptyset$. 
A set-valued mapping is called \textit{upper (lower)
semi-continuous}, if it is upper (lower) semi-continuous at all
$x\in\X$. A set-valued mapping is called \textit{continuous}, if
it is upper and lower semi-continuous. For a topological space
$\U$, we denote by $\K(\U)$ the family of all nonempty compact
subsets of     \ $\U.$

For Hausdorff topological spaces, Berge's well-known maximum
theorem
(cf. Berge \cite[p.~116]{Berge}) has the following formulation.\\
\vskip -0.5 cm \noindent{\bf Berge's Maximum Theorem.} {\rm (Hu
and Papageorgiou~\cite[p.~84]{Hu}) \textit{If
$u:\X\times\Y\to{\mathbb{R}}$ is a continuous function and
$\Phi:\X\to \K(\Y)$ is a 
continuous set-valued
mapping, then the value function $v:\X\to\R$ is continuous and
the solution multifunction ${\Phi}^*:\X\to \S(\Y)$, defined as
\begin{equation}\label{eq:lm000} {\Phi}^*(x)=\left\{y\in
{\Phi}(x):\,v(x)=u(x,y)\right\},\quad x\in\X,
\end{equation}
is  upper semi-continuous and compact-valued.}

This paper extends Berge's theorem to possibly noncompact sets
$\Phi(x),$ $x\in\X.$  
For a numerical function $f$, defined on a nonempty subset $U$ of a topological space $\mathbb{U},$ consider the level sets
\begin{equation}\label{def-D}
\mathcal{D}_f(\lambda;U)=\{y\in U \, : \,  f(y)\le \lambda\},\qquad \lambda\in\R.\end{equation}  We recall that a function $f$ is \textit{lower
semi-continuous on $U$} if all the level sets $\mathcal{D}_f(\lambda;U)$
 are closed, and a function $f$ is
\textit{inf-compact} (also sometimes called \emph{lower semi-compact}) on $U$ if all these sets are compact.

For $Z \subseteq \X$, let
\[
{\rm Gr}_Z(\Phi)=\{(x,y)\in Z\times\Y\,:\, y\in\Phi(x)\}
\]
%
\begin{definition}\label{def:kinf} {\rm (Feinberg et al. \cite[Definition 1.1]{Feinberg et
al})} A function $u:\X\times \Y\to \overline{\mathbb{R}}$ is called
$\K$-inf-compact on ${\rm Gr}_{\X}(\Phi)$, 
if for every $K\in \K(\X)$ this function is inf-compact on ${\rm
Gr}_K(\Phi)$.
\end{definition}


In many applications the space $\X$ is compactly generated. Recall
that a topological space $\X$ is \emph{compactly generated\/}
(Munkres \cite[p. 283]{Munkres} or a $k$-space, Kelley
\cite[p.~230]{Kelley}, Engelking \cite[p.~152]{Engelking}) if it
satisfies the following property: each set $A \subseteq \X$ is
closed in $\X$ if $A \cap K$ is closed in $K$ for each $K \in
\K(\X)$. In particular, all locally compact spaces (hence,
manifolds) and all sequential spaces (hence, first-countable,
including metrizable/metric spaces) are compactly generated; see
Munkres \cite[Lemma 46.3, p. 283]{Munkres}, Engelking
\cite[Theorem 3.3.20, p. 152]{Engelking}.

The following theorem and its generalization for Hausdorff topological  spaces, Theorem~\ref{thm: Berge1}, 
are the main results of this
paper.

\begin{theorem}\label{thm: Berge}
Assume that:
\begin{enumerate}[(a)]
\item $\X$ is a compactly generated topological space;
\item $\Phi: \X \to \S(\Y)$ is lower semi-continuous; \item $u: \X
\times \Y \to \R$ is $\K$-inf-compact and upper semi-continuous on
$\graph_{\X}(\Phi)$.
\end{enumerate}
Then the value function $v: \X\to \R$ is continuous and the solution multifunction $\Phi^*:\X\to \K(\Y)$ is 
upper semi-continuous and compact-valued.
\end{theorem}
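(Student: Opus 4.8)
The plan is to deal first with the pointwise assertions using that one-point sets are compact, then to prove upper semicontinuity of $v$ by a direct neighbourhood argument, and finally to invoke the defining property of compactly generated spaces in order to reduce lower semicontinuity of $v$ and upper semicontinuity of $\Phi^*$ to the same statements for the restrictions of $u$, $\Phi$, $v$, $\Phi^*$ to the compact subsets $K\in\K(\X)$; on such a $K$ the function $u$ will turn out to be continuous on $\graph_K(\Phi)$, and the problem becomes essentially classical.

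First I would extract two consequences of $\K$-inf-compactness. For any $K\in\K(\X)$ every level set $\mathcal{D}_u(\lambda;\graph_K(\Phi))$ is compact, hence closed in the Hausdorff space $\X\times\Y$ and therefore in $\graph_K(\Phi)$; thus $u$ is lower semicontinuous on $\graph_K(\Phi)$, and since it is upper semicontinuous there by hypothesis, $u$ is \emph{continuous} on $\graph_K(\Phi)$. Taking $K=\{x\}$, a Weierstrass argument (a minimizing net for \eqref{eq:lm10} eventually lies in a compact level set, hence has a subnet converging to some $y^*\in\Phi(x)$, and lower semicontinuity of $u(x,\cdot)$ then forces $u(x,y^*)=v(x)$) shows $v(x)\in\R$, that the infimum in \eqref{eq:lm10} is attained, and that $\Phi^*(x)$, being the projection onto $\Y$ of the compact set $\mathcal{D}_u(v(x);\graph_{\{x\}}(\Phi))$, belongs to $\K(\Y)$. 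Upper semicontinuity of $v$ needs neither compactness nor $\K$-inf-compactness: given $x_0$ and $\lambda>v(x_0)$, pick $y_0\in\Phi(x_0)$ with $u(x_0,y_0)<\lambda$, use upper semicontinuity of $u$ to obtain a product neighbourhood $\mathcal{O}(x_0)\times\mathcal{G}$ of $(x_0,y_0)$ on which $u<\lambda$ along $\graph_{\X}(\Phi)$, and use lower semicontinuity of $\Phi$ at $x_0$ (with the open set $\mathcal{G}$, which meets $\Phi(x_0)$) to shrink $\mathcal{O}(x_0)$ so that $\Phi(x)\cap\mathcal{G}\neq\emptyset$ on it; then $v<\lambda$ there.

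The two remaining properties rest on the fact that, in a compactly generated space, $A\subseteq\X$ is closed iff $A\cap K$ is closed in $K$ for every $K\in\K(\X)$. For lower semicontinuity of $v$ it therefore suffices to show that each $\mathcal{D}_v(\lambda;\X)\cap K=\{x\in K:v(x)\le\lambda\}$ is closed in $K$: if $x_\alpha\to x_0$ in $K$ with $v(x_\alpha)\le\lambda$, choose (by the already proved attainment) $y_\alpha\in\Phi(x_\alpha)$ with $u(x_\alpha,y_\alpha)=v(x_\alpha)\le\lambda$, so that $(x_\alpha,y_\alpha)$ lies in the compact set $\mathcal{D}_u(\lambda;\graph_K(\Phi))$; a convergent subnet has limit $(x_0,y_0)$ in that set (the first coordinate pins $x_0$ down since $K$ is Hausdorff), whence $y_0\in\Phi(x_0)$ and $v(x_0)\le u(x_0,y_0)\le\lambda$. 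For $\Phi^*$, upper semicontinuity is equivalent to closedness of $\{x:\Phi^*(x)\cap C\neq\emptyset\}$ for every closed $C\subseteq\Y$, which by the same characterization amounts to upper semicontinuity of $\Phi^*|_K$ on $K$ for every $K\in\K(\X)$. Fixing $K$, $x_0\in K$ and arguing by contradiction, an open $\mathcal{G}\supseteq\Phi^*(x_0)$ and a net $x_\alpha\to x_0$ in $K$ with $y_\alpha\in\Phi^*(x_\alpha)\setminus\mathcal{G}$ would, using continuity of $v$ to bound $v(x_\alpha)$ and hence to place $(x_\alpha,y_\alpha)$ eventually in a compact level set $\mathcal{D}_u(\mu;\graph_K(\Phi))$, have a subnet converging to some $(x_0,y^*)$ there; continuity of $u$ on $\graph_K(\Phi)$ gives $u(x_0,y^*)=\lim_\alpha v(x_\alpha)=v(x_0)$, so $y^*\in\Phi^*(x_0)\subseteq\mathcal{G}$, contradicting $y^*\in\Y\setminus\mathcal{G}$ (a closed set containing every $y_\alpha$).

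I expect the main difficulty to be conceptual rather than computational: one must observe that $\K$-inf-compactness combined with upper semicontinuity upgrades $u$ to a \emph{continuous} function on each $\graph_K(\Phi)$ --- without this the minimizing and ``escaping'' nets above would yield limits at which $u$ is controlled from only one side --- and one must organise the reduction to compact $K$ correctly, exploiting that closedness of a set and upper semicontinuity of a compact-valued multifunction are both detected on the compact subsets of a compactly generated space. Once everything is restricted to a fixed compact $K$, the net arguments are routine; the single point requiring care is that the subnet limits remain in $\graph_K(\Phi)$, which is ensured by compactness (hence closedness) of the level sets of $u$ rather than by any closedness hypothesis on $\Phi$.
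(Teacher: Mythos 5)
Your proof is correct, but it follows a genuinely different route from the paper. The paper deduces Theorem~\ref{thm: Berge} from the Hausdorff-space result, Theorem~\ref{thm: Berge1}, via Corollary~\ref{cor2.2}: it first shows (Theorem~\ref{kf}(ii), using the one-point extension $\bar\Y=\Y\cup\{a\}$ and the $\K$/$\K\N$-upper semi-compactness machinery of Theorem~\ref{uscusc}) that on a compactly generated $\X$ every $\K$-inf-compact $u$ is $\K\N$-inf-compact, and then runs net arguments with $\K\N$-inf-compactness to get continuity of $v$ and $\K\N$-upper semi-compactness (hence upper semi-continuity) of $\Phi^*$. You bypass the $\K\N$ notions entirely and exploit the compactly generated hypothesis directly, twice: once to get lower semi-continuity of $v$ from its restrictions to compact $K$ (this part coincides with the paper's proof of Theorem~\ref{thm: some properties of value function and minimizers}(c),(d)), and once to test upper semi-continuity of $\Phi^*$ on compact subsets via the Aliprantis--Border closed-lower-inverse criterion (the same device the paper uses inside Theorem~\ref{uscusc}(iii)); your observation that compact level sets plus upper semi-continuity make $u$ continuous on each $\graph_K(\Phi)$, and your standard argument for upper semi-continuity of $v$ from lower semi-continuity of $\Phi$, are both sound (the latter is left largely implicit in the paper's proof of Theorem~\ref{thm: Berge1}). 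What each approach buys: yours is shorter and self-contained for Theorem~\ref{thm: Berge}, but it is confined to compactly generated $\X$, since controlling arbitrary convergent nets (not just those trapped in $\graph_K(\Phi)$ for compact $K$) is exactly what fails for general Hausdorff spaces (cf.\ Example~\ref{exa:Mark}); the paper's longer route yields, along the way, the strictly more general Theorem~\ref{thm: Berge1} and the equivalence of $\K$- and $\K\N$-inf-compactness as results of independent interest.
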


When $\X$ is a  compactly generated topological space,
Theorem~\ref{thm: Berge} generalizes  Berge's Maximum Theorem
because, if $\Phi$ is a compact-valued and upper semi-continuous
mapping, then a lower semi-continuous function $u: \X \times \Y
\to \R$ is $\K$-inf-compact on $\graph_{\X}(\Phi)$; Feinberg et
al.~\cite[Lemma 2.1(i)]{Feinberg et al} or its generalization,
Lemma~\ref{lm0}(i).
 Note that a more particular result than
Theorem~\ref{thm: Berge} is formulated in Feinberg et al.
\cite[Theorem~4.1]{Feinberg et al}  for Hausdorff topological
spaces, where upper semi-continuity of the solution multifunction
$\Phi^*:\X\to \K(\Y)$  is stated  for a continuous set-valued
mapping $\Phi:\X\to
\S(\Y)$ and for a continuous function $u$. 
However, when the topological space $\X$ is Hausdorff, it is
necessary to consider a more restrictive assumption for
$u(\cdot,\cdot)$ than $\K$-inf-compactness, because 
$\K$-inf-compactness of $u$ on $\graph(\Phi)$ is not sufficient
for lower semi-continuity of the value function $v$ on $\X$; see
Example~\ref{exa:Mark}. This assumption (we call it
\emph{$\K\N$-inf-compactness}) is a generalization of the
$\K$-inf-compactness property  in the way it is
formulated in Feinberg et al. \cite{Feinberg et al MDP} as
Assumption ${\rm\bf (W^*)}$(ii) for metric spaces.
\begin{definition}\label{def:F-ic}
A function $u:\X\times \Y\to \overline{\mathbb{R}}$ is called
$\K\N$-inf-compact on ${\rm Gr}_{\X}(\Phi)$, if the following two
conditions hold:
\begin{enumerate}[(i)]
\item $u(\cdot,\cdot)$ is lower semi-continuous on ${\rm
Gr}_{\X}(\Phi)$;
\item for any convergent net $\{x_i
\}_{i\in I}$ with values in $\X$ whose limit $x$ belongs to $\X$, any net
$\{y_i\}_{i\in I}$ defined on the same ordered set $I$ with $y_i\in
\Phi(x_i)$, $i\in I,$ and
satisfying the condition that the set $\{u(x_i,y_i): i\in I \}$ is bounded
above, has an accumulation point $y\in \Phi(x).$
\end{enumerate}
\end{definition}

As proved below, $\K$-inf-compactness and $\K\N$-inf-compactness
are equivalent if $\X$ is  a
compactly generated topological space (Corollary~\ref{cor2.2}).
All the statements in Feinberg et al. \cite{Feinberg et al} are
formulated for $\K$-inf-compact functions $u$.  However, the
statements of Feinberg et al. \cite[Theorems 1.2, 3.1(a), 4.1 and
Lemma 2.3]{Feinberg et al} require slightly stronger assumptions
if we want them to hold for a Hausdorff topological space $\X$.
Indeed, the proofs in 
\cite[Theorems 1.2, 3.1(a), 4.1 and Lemma 2.3]{Feinberg et al}
rely on the claim that, for any convergent net $\{x_\alpha\}$ in a
Hausdorff topological space with a limit $x$, the set
$(\cup_\alpha\{x_\alpha\})\cup\{x\}$ is a compact set. This is
true for converging sequences, but not necessarily for converging
nets (cf. Example~\ref{exa:Mark}). Restricting attention to
compactly generated spaces, which are more general objects than
metric spaces, makes all the results in \cite{Feinberg et al}
valid and makes it possible to formulate Berge's maximum theorem
for $K$-inf-compact functions and noncompact image sets; see
Theorem~\ref{thm: Berge}. For general Hausdorff topological
spaces, the $\K\N$-inf-compactness assumption is needed; see
Theorems~\ref{thm: Berge1}, \ref{zMT1}, \ref{zlsc}, and
Proposition~\ref{prop3.6}.




When $\X$ is a Hausdorff topological space, the following theorem
  is analogous to Theorem~\ref{thm: Berge}.  In view of
  Lemma~\ref{lm0}(i), Theorem~\ref{thm: Berge1} generalizes
  Berge's Maximum Theorem to possibly noncompact  sets $\Phi(x),$ $x\in\X$.
\begin{theorem}\label{thm: Berge1}
If a function $u: \X \times \Y \to \mathbb{R}$ is
$\K\N$-inf-compact and upper semi-continuous on
$\graph_{\X}(\Phi)$ and $\Phi: \X \to \S(\Y)$ is a lower
semi-continuous set-valued mapping, then the value function $v:
\X\to \R$ is continuous and the solution multifunction
$\Phi^*:\X\to \K(\Y)$ is
upper semi-continuous and compact-valued.
\end{theorem}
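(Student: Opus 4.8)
The plan is to run every argument through nets, using repeatedly the following basic construction: if $x_i\to x$ in $\X$ and $y_i\in\Phi(x_i)$ with $\sup_i u(x_i,y_i)<\infty$, then $\K\N$-inf-compactness~(ii) supplies a cluster point $y\in\Phi(x)$ of $\{y_i\}$, and, passing to a subnet $(x_j,y_j)\to(x,y)$ in $\graph_\X(\Phi)$ and invoking lower semi-continuity of $u$ (item~(i)), one gets $u(x,y)\le\liminf_j u(x_j,y_j)$. A recurring subtlety is that a convergent net in $\R$ need not be bounded above, so before applying (ii) one must first pass to a subnet on which the relevant values are bounded above; this is exactly why $\K\N$-inf-compactness, rather than $\K$-inf-compactness, is the correct hypothesis for a general Hausdorff space. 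First I would verify that $v$ is real-valued: $v(x)<+\infty$ because $\Phi(x)\ne\emptyset$ and $u$ is $\R$-valued, while if $v(x)=-\infty$ one picks $y_n\in\Phi(x)$ with $u(x,y_n)\to-\infty$ and applies the construction to the constant net $x_n\equiv x$ (the values $u(x,y_n)$ tend to $-\infty$, hence are bounded above), forcing $u(x,y)=-\infty$ at a cluster point $y\in\Phi(x)$, contradicting that $u$ is $\R$-valued. Then, for lower semi-continuity of $v$: if $x_i\to x$ but $\liminf_i v(x_i)<d<v(x)$ for some $d\in\R$, then $v(x_i)<d$ frequently, so along a (cofinal, hence directed) subnet one can choose $y_i\in\Phi(x_i)$ with $u(x_i,y_i)<d$, and the construction yields $y\in\Phi(x)$ with $v(x)\le u(x,y)\le d$, a contradiction.

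For upper semi-continuity of $v$ I would argue straight from the definitions, and this is where lower semi-continuity of $\Phi$ is used. Fixing $x\in\X$ and $\varepsilon>0$, choose $y\in\Phi(x)$ with $u(x,y)<v(x)+\varepsilon/2$. Upper semi-continuity of $u$ on $\graph_\X(\Phi)$ provides open sets $A\ni x$ in $\X$ and $B\ni y$ in $\Y$ with $u(x',y')<v(x)+\varepsilon$ for every $(x',y')\in(A\times B)\cap\graph_\X(\Phi)$, and lower semi-continuity of $\Phi$ at $x$, together with $\Phi(x)\cap B\ne\emptyset$, provides a neighborhood $O(x)$ with $\Phi(x')\cap B\ne\emptyset$ for all $x'\in O(x)$. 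Then for $x'\in A\cap O(x)$, any $y'\in\Phi(x')\cap B$ gives $v(x')\le u(x',y')<v(x)+\varepsilon$, so $v$ is upper semi-continuous, hence continuous.

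For the solution multifunction, fix $x$. Applying the basic construction to the constant net $x_i\equiv x$ and a net $y_i\in\Phi(x)$ with $u(x,y_i)\to v(x)$ produces a cluster point $y\in\Phi(x)$ with $u(x,y)\le v(x)$, so $u(x,y)=v(x)$ and $\Phi^*(x)\ne\emptyset$. The same computation applied to an arbitrary net in $\Phi^*(x)$, along which $u(x,\cdot)\equiv v(x)$ is trivially bounded above, shows that every such net has a cluster point in $\Phi^*(x)$, so by the net characterization of compactness $\Phi^*(x)$ is compact. For upper semi-continuity of $\Phi^*$, suppose $G$ is open, $\Phi^*(x)\subseteq G$, yet $x_i\to x$ with $y_i\in\Phi^*(x_i)\setminus G$. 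Since $v$ is continuous, $u(x_i,y_i)=v(x_i)\to v(x)$, so on a subnet $v(x_i)<v(x)+1$; the basic construction then gives a cluster point $y\in\Phi(x)$ and, along a convergent subnet $(x_j,y_j)\to(x,y)$, $u(x,y)\le\liminf_j v(x_j)=v(x)$, whence $y\in\Phi^*(x)\subseteq G$, contradicting $y_j\to y$ since $G$ is open and each $y_j\notin G$. Hence $\Phi^*$ is upper semi-continuous and compact-valued.

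The hard part is not any single estimate but the net bookkeeping: one must notice that (ii) cannot be fed a convergent net directly, only after thinning to a subnet on which the objective is bounded above, and one must check at each use of lower semi-continuity of $u$ that the limit point produced actually lies in $\graph_\X(\Phi)$, which is why the construction always carries the pair $(x_j,y_j)$ rather than $y_j$ alone.
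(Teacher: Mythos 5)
Your proof is correct and follows essentially the same route as the paper's: pass to a subnet on which the objective values are bounded above before invoking condition (ii) of $\K\N$-inf-compactness, use lower semi-continuity of $u$ along the convergent subnet of pairs in $\graph_{\X}(\Phi)$, apply the classical lower semi-continuity of $\Phi$ / upper semi-continuity of $u$ argument for upper semi-continuity of $v$, and use the net characterizations of compactness and upper semi-continuity for $\Phi^*$. The only difference is presentational: you inline what the paper delegates to its auxiliary results (Theorem~\ref{thm: some properties of value function and minimizers}, Theorem~\ref{zMT1}, and Theorem~\ref{uscusc}(i)) and make explicit the upper semi-continuity of $v$ and the boundedness-before-applying-(ii) subtlety, which the paper treats tersely.
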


The papers that extend  Berge's theorems to different directions
include Ausubel and Deneckere \cite{AD}, Horsley et al.
\cite{HWVZ}, Leininger \cite{Leininger}, Montes-de-Oca and
Lemus-Rodr\'{\i}guez \cite{MdOLR}, and Walker \cite{Walker}.
Relations with provided above theorems are mostly superficial;
most papers impose stronger topological restrictions \cite{HWVZ},
assume compact-valuedness of the feasibility multifunction $\Phi$
\cite{Leininger, Walker}, or impose restrictions on the value
function $v$ rather than on the primitives of the model \cite{AD}.
None of these papers appeals to compactly generated spaces and/or
$\K/\K\N$-inf-compactness. Only the recent paper by Montes-de-Oca
and Lemus-Rodr\'{\i}guez \cite{MdOLR}  contains results relevant
to this paper. They restrict attention to metric spaces and use
inf-compactness, rather than generalizations provided above, in
addition to other restrictions to derive special cases of
presented results on the value function $v$ and solution
multifunction $\Phi^*$. In particular, the main results of
\cite{MdOLR}, Theorems 3.1 and 4.1, are corollaries  of
Theorems~\ref{thm: Berge} and \ref{thm: Berge1} above, as well as
of \cite[Theorem 4.1]{Feinberg et al} applied to metric spaces.


\section{Classification of Inf-Compactness Properties}

In this section we study the relation between
$\K\N$-inf-compactness and $\K$-inf-compactness.
%

\begin{theorem}\label{kf}
The following statements hold:
\begin{enumerate}[(i)]
\item if $u:\X\times \Y\to \overline{\mathbb{R}}$ is
$\K\N$-inf-compact on ${\rm Gr}_{\X}(\Phi)$, then the function
$u(\,\cdot\,,\,\cdot\,)$ is $\K$-inf-compact on ${\rm
Gr}_{\X}(\Phi)$; \item if $u:\X\times \Y\to \overline{\mathbb{R}}$
is $\K$-inf-compact on ${\rm Gr}_{\X}(\Phi)$ and $\X$ is a compactly
generated topological space, then the function
$u(\,\cdot\,,\,\cdot\,)$ is $\K\N$-inf-compact on ${\rm
Gr}_{\X}(\Phi)$.
\end{enumerate}
\end{theorem}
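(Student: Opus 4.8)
The plan is to prove the two implications separately, with part (ii) being the substantive direction.

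\textbf{Part (i).}
First I would fix a compact set $K \in \K(\X)$ and a real number $\lambda$, and show that the level set $\mathcal{D}_u(\lambda; {\rm Gr}_K(\Phi))$ is compact. Since $\X$ is Hausdorff, $K$ is closed, so ${\rm Gr}_K(\Phi) \subseteq K \times \Y$. To prove compactness of a subset of a topological space, the natural route is to show that every net in the level set has an accumulation point lying in the level set (this characterizes compactness for general topological spaces). So I would take a net $\{(x_i, y_i)\}_{i \in I}$ with $x_i \in K$, $y_i \in \Phi(x_i)$, and $u(x_i, y_i) \le \lambda$. By compactness of $K$, the net $\{x_i\}$ has a subnet converging to some $x \in K$; passing to this subnet (relabeling), I may assume $x_i \to x$. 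The net $\{u(x_i, y_i)\}$ is bounded above by $\lambda$, so condition (ii) of $\K\N$-inf-compactness gives an accumulation point $y \in \Phi(x)$ of $\{y_i\}$; then $(x, y)$ is an accumulation point of the net $\{(x_i, y_i)\}$ along a suitable further subnet along which $y_i \to y$. Finally, lower semi-continuity of $u$ on ${\rm Gr}_{\X}(\Phi)$ (condition (i)) forces $u(x, y) \le \liminf u(x_i, y_i) \le \lambda$, so $(x, y) \in \mathcal{D}_u(\lambda; {\rm Gr}_K(\Phi))$. This establishes $\K$-inf-compactness. (Lower semi-continuity of $u$ on ${\rm Gr}_{\X}(\Phi)$, needed in Definition \ref{def:kinf} implicitly through closedness of level sets, is part of $\K\N$-inf-compactness by definition.)

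\textbf{Part (ii).}
Now assume $u$ is $\K$-inf-compact on ${\rm Gr}_{\X}(\Phi)$ and $\X$ is compactly generated. I must verify conditions (i) and (ii) of Definition \ref{def:F-ic}. For (i), lower semi-continuity of $u$ on ${\rm Gr}_{\X}(\Phi)$: a set $A \subseteq {\rm Gr}_{\X}(\Phi)$ is closed in ${\rm Gr}_{\X}(\Phi)$ iff $A \cap {\rm Gr}_K(\Phi)$ is closed in ${\rm Gr}_K(\Phi)$ for every $K \in \K(\X)$ — here I would use that $\X$ is compactly generated together with a lemma identifying the compact subsets of ${\rm Gr}_{\X}(\Phi)$, or more directly argue via the projection onto $\X$. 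Applying this to level sets $A = \mathcal{D}_u(\lambda; {\rm Gr}_{\X}(\Phi))$ and using that $A \cap {\rm Gr}_K(\Phi) = \mathcal{D}_u(\lambda; {\rm Gr}_K(\Phi))$ is compact (hence closed, as $\Y$ is Hausdorff), the compactly generated property of ${\rm Gr}_{\X}(\Phi)$ yields closedness of $A$. The transfer of ``compactly generated'' from $\X$ to ${\rm Gr}_{\X}(\Phi)$ is the delicate point and is presumably handled by an auxiliary lemma; if not available I would instead prove lower semi-continuity by a direct net argument restricted to compact subsets.

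\textbf{Part (ii), condition (ii).}
This is where the main obstacle lies. Take a convergent net $x_i \to x$ in $\X$, a net $y_i \in \Phi(x_i)$ with $\{u(x_i,y_i) : i \in I\}$ bounded above by some $\lambda$; I need an accumulation point $y \in \Phi(x)$ of $\{y_i\}$. The natural idea is to set $K := \{x\} \cup \{x_i : i \in I\}$ and note that all the pairs $(x_i, y_i)$ lie in $\mathcal{D}_u(\lambda; {\rm Gr}_K(\Phi))$; if $K$ were compact this set would be compact by $\K$-inf-compactness, and an accumulation point of the net $\{(x_i,y_i)\}$ in this compact set would supply the desired $y$ (using that the first coordinate of any accumulation point must equal $x$, since $x_i \to x$ in the Hausdorff space $\X$). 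The difficulty flagged in the paper's introduction is that $K$ need \emph{not} be compact for a net (only for a sequence). Here is where compact generation of $\X$ enters: I would argue that it suffices to find, for the given net, a single compact set $K' \in \K(\X)$ containing $x$ and cofinally many $x_i$ — or rather, I would pass to the closure and exploit that in a compactly generated space a set is closed iff its trace on every compact is closed, to deduce that $x$ lies in the closure of $\{x_i\}_{i\in I}$ intersected with some compact set. Concretely, I expect the cleanest argument runs: suppose for contradiction $\{y_i\}$ has no accumulation point in $\Phi(x)$; then for each $y \in \Phi(x)$ pick a neighborhood escaping the net cofinally, and combine this with a separating argument on $\X$ to build a closed set $A$ whose trace on every $K \in \K(\X)$ is closed (using $\K$-inf-compactness to get compactness, hence closedness, of the relevant level sets on ${\rm Gr}_K(\Phi)$) but which fails to be closed because it omits $x$ while containing a subnet limiting to $x$ — contradicting compact generation of $\X$. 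Making this contradiction argument precise, in particular correctly choosing the set $A$ and verifying the trace condition on \emph{every} compact subset rather than just the one carrying the net, is the step I expect to require the most care.
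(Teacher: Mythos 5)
Your part (i) is fine and is essentially the paper's own argument (net in the level set, accumulation point from Definition~\ref{def:F-ic}(ii) plus compactness of $K$, membership in the level set from Definition~\ref{def:F-ic}(i)). The gap is in part (ii). Your primary route to condition (i) of Definition~\ref{def:F-ic} — treating ${\rm Gr}_{\X}(\Phi)$ as if it were compactly generated, so that closedness of $\mathcal{D}_u(\lambda;{\rm Gr}_{\X}(\Phi))$ could be tested on the traces ${\rm Gr}_K(\Phi)$ — rests on an unproved and in general false transfer: compact generation is not inherited by products with a Hausdorff factor, nor by arbitrary subspaces, and no such auxiliary lemma exists in the paper (the paper only ever applies the compactly generated property to subsets of $\X$ itself). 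Your fallback, a ``direct net argument restricted to compact subsets,'' fails for exactly the reason the theorem is delicate: a net in the level set converging to a point of ${\rm Gr}_{\X}(\Phi)$ need not have its $\X$-coordinates contained in any compact subset of $\X$ (this is precisely the pathology of Example~\ref{exa:Mark}), so $\K$-inf-compactness cannot be applied to it directly.

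For condition (ii) your contradiction sketch has the right shape but omits the one construction that makes it work. The concrete object needed is, for a closed set $F\subseteq\Y$, the set $B_F:=\{x'\in\X:\ \exists\, y'\in\Phi(x')\ \text{with}\ u(x',y')\le\lambda\ \text{and}\ y'\in F\}$. For every $K\in\K(\X)$, $B_F\cap K$ is the projection onto $\X$ of the compact set $\mathcal{D}_u(\lambda;{\rm Gr}_K(\Phi))\cap(\X\times F)$, hence closed; compact generation of $\X$ (applied to $B_F\subseteq\X$, not to subsets of the graph) then gives that $B_F$ is closed. Choosing $V$ to be a finite union of neighborhoods covering the compact set $\{y\in\Phi(x):u(x,y)\le\lambda\}$ that the net $\{y_i\}$ eventually avoids, and $F=\Y\setminus V$, yields $x\in B_F$, a contradiction; this argument in fact produces an accumulation point $y\in\Phi(x)$ with $u(x,y)\le\lambda$, which simultaneously delivers condition (i) and so repairs the first gap. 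The paper packages exactly this mechanism differently: it adjoins an isolated point $a$ to $\Y$, forms $\Phi_\lambda(x)=\{y\in\Phi(x):u(x,y)\le\lambda\}\cup\{a\}$, notes that ${\rm Gr}_K(\Phi_\lambda)$ is compact for every $K\in\K(\X)$, and invokes Theorem~\ref{uscusc} ($\K$-upper semi-compact plus compactly generated implies compact-valued and upper semi-continuous, hence $\K\N$-upper semi-compact), reading off both closedness of the level sets and the accumulation-point property. As written, your plan stops short of this decisive step and its stated route to lower semi-continuity would not survive scrutiny.
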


Theorem~\ref{kf} implies the following statement.
\begin{corollary}\label{cor2.2} If  $\X$ is a
compactly generated topological space, then a function $u:\X\times
\Y\to \overline{\mathbb{R}}$ is $\K\N$-inf-compact on ${\rm
Gr}_{\X}(\Phi)$ if and only if it  is $\K$-inf-compact on ${\rm
Gr}_{\X}(\Phi)$.
\end{corollary}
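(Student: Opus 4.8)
Corollary~\ref{cor2.2} is an immediate consequence of Theorem~\ref{kf}, so the plan is to establish that theorem and read off the corollary. Fix a compactly generated topological space $\X$. If $u$ is $\K\N$-inf-compact on $\graph_\X(\Phi)$, then Theorem~\ref{kf}(i), which needs no hypothesis on $\X$ at all, gives that $u$ is $\K$-inf-compact on $\graph_\X(\Phi)$; conversely, if $u$ is $\K$-inf-compact on $\graph_\X(\Phi)$, then Theorem~\ref{kf}(ii), applied with the standing assumption that $\X$ is compactly generated, gives that $u$ is $\K\N$-inf-compact on $\graph_\X(\Phi)$. That is the entire proof of the corollary, so all the substance sits in the two parts of Theorem~\ref{kf}, which I sketch next.

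For part (i) I would unwind the definition of $\K$-inf-compactness: fix $K\in\K(\X)$ and $\lambda\in\R$, and show that the level set $\mathcal{D}_u(\lambda;\graph_K(\Phi))$ is compact. Take a net $\{(x_i,y_i)\}_{i\in I}$ in this set. Compactness of $K$ yields a subnet along which $x_i\to x\in K$; pass to it (still writing $\{(x_i,y_i)\}$). The values $u(x_i,y_i)\le\lambda$ are bounded above, so condition (ii) of $\K\N$-inf-compactness produces an accumulation point $y\in\Phi(x)$ of $\{y_i\}$, and passing to a further subnet gives $(x_j,y_j)\to(x,y)\in\graph_K(\Phi)$. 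Lower semi-continuity of $u$ on $\graph_\X(\Phi)$, that is, condition (i) of $\K\N$-inf-compactness, then yields $u(x,y)\le\liminf_j u(x_j,y_j)\le\lambda$. Hence every net in $\mathcal{D}_u(\lambda;\graph_K(\Phi))$ has a subnet converging inside it, so the set is compact. This argument uses no assumption on $\X$.

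Part (ii) is the substantive direction and the one place where the compactly generated hypothesis on $\X$ is indispensable. From $\K$-inf-compactness, each level set $\mathcal{D}_u(\lambda;\graph_K(\Phi))$, $K\in\K(\X)$, is compact, hence closed in the Hausdorff space $\X\times\Y$. Condition (i) of $\K\N$-inf-compactness, lower semi-continuity of $u$ on $\graph_\X(\Phi)$, I would obtain by testing closedness of $\mathcal{D}_u(\lambda;\graph_\X(\Phi))$ against compact subsets of $\X$, invoking the defining property of compactly generated spaces together with the fact that $\graph_K(\Phi)$ projects into $K$. The crux is condition (ii): given a net $x_i\to x$ in $\X$ and points $y_i\in\Phi(x_i)$ with $\{u(x_i,y_i)\}$ bounded above by some $\lambda$, one wants an accumulation point of $\{y_i\}$ lying in $\Phi(x)$. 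If $\X$ were metric or merely sequential, one would pass to a subsequence, observe that $K:=\{x\}\cup\{x_{i_k}:k\in\N\}$ is compact, feed $K$ to $\K$-inf-compactness, and extract a convergent sub-subsequence of $\{y_{i_k}\}$. For a general net in a compactly generated space this ``the tail is relatively compact'' shortcut fails, which is precisely the phenomenon behind Example~\ref{exa:Mark}, so the main obstacle is to replace it by an argument that still only feeds compact sets to the hypothesis: one must manufacture, from the net and its limit, the appropriate compact subsets of $\X$ on which $\K$-inf-compactness can be invoked, and patch the resulting partial conclusions into a single accumulation point in $\Phi(x)$. Getting that patching right, rather than any individual estimate, is where the real work lies.
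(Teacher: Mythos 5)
Your reduction of Corollary~\ref{cor2.2} to Theorem~\ref{kf} is exactly the paper's (the corollary is stated there as an immediate consequence of that theorem), and your argument for Theorem~\ref{kf}(i) matches the paper's proof. The problem is part (ii), which is precisely the direction the corollary needs, and there your proposal has a genuine gap in both of its halves. First, the step you do sketch—establishing lower semi-continuity of $u$ by ``testing closedness of $\mathcal{D}_u(\lambda;\graph_\X(\Phi))$ against compact subsets of $\X$, invoking the defining property of compactly generated spaces''—is not a legitimate use of that property: the level set is a subset of $\X\times\Y$, while compact generation of $\X$ only lets you test closedness of subsets of $\X$ against compact subsets of $\X$. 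Knowing that $D\cap(K\times\Y)=\mathcal{D}_u(\lambda;\graph_K(\Phi))$ is closed for every $K\in\K(\X)$ does not by itself yield closedness of $D$ in $\X\times\Y$ (the product of a compactly generated space with a Hausdorff space need not be compactly generated, and no slice-wise substitute is available for free). Second, for condition (ii) of Definition~\ref{def:F-ic} you explicitly stop at the main obstacle and say the ``patching'' of compact sets is where the real work lies; that work is the content of the theorem, so the proposal does not prove it.

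The paper resolves both difficulties with one construction that your sketch is missing: adjoin an isolated point $a$ to $\Y$, set $\bar\Y=\Y\cup\{a\}$, and define $\Phi_\lambda(x)=\{y\in\Phi(x):u(x,y)\le\lambda\}\cup\{a\}$, which is nonempty-valued and satisfies $\graph_K(\Phi_\lambda)=\mathcal{D}_u(\lambda;\graph_K(\Phi))\cup(K\times\{a\})\in\K(\X\times\bar\Y)$ for every $K\in\K(\X)$, i.e.\ $\Phi_\lambda$ is $\K$-upper semi-compact. Theorem~\ref{uscusc}(iii) then upgrades this to upper semi-continuity and compact-valuedness, and this is where compact generation is applied correctly: the sets whose closedness must be checked, $\{x\in\X:\Phi_\lambda(x)\cap F\ne\emptyset\}$, live in $\X$ itself. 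Closedness of $\graph_\X(\Phi_\lambda)$ then gives closedness of $\mathcal{D}_u(\lambda;\graph_\X(\Phi))$ (since $\X\times\{a\}$ is clopen and disjoint from it), and Theorem~\ref{uscusc}(i) gives $\K\N$-upper semi-compactness of $\Phi_\lambda$, which delivers the accumulation point $y\in\Phi(x)$ with $y\ne a$ because $a$ is isolated. Without this (or an equivalent) device, the ``$\K$-inf-compact $\Rightarrow$ $\K\N$-inf-compact'' half of Corollary~\ref{cor2.2} remains unproved; Example~\ref{exa:Mark} shows the conclusion genuinely uses compact generation, so the missing step cannot be waved through.
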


Before the proof of Theorem~\ref{kf}, we describe auxiliary
properties of set-valued mappings.

\subsection{Properties of  Set-Valued Mappings}

This subsection introduces $\K$-upper semi-compact and
$\K\N$-upper semi-compact set-valued mappings and relates these
two objects to each other and to upper semi-continuous set-valued
mappings.


\begin{definition}
A set-valued mapping ${\Psi}:\X \to \S(\Y)$ is $\K$-\textit{upper
semi-compact}, if for every $K\in \K(\X)$ the set ${\rm Gr}_K(\Psi)$
is compact.
\end{definition}
\begin{definition}\label{sc}
The mapping $\Psi:\X\to\S(\Y)$  is \textit{ $\K\N$-upper
semi-compact} if for any convergent net $\{x_i \}_{i\in I}$ with
values in $\X$, whose limit $x$ belongs to $\X$, any net
$\{y_i\}_{i\in I}$, defined on the same ordered set $I$ with $y_i\in
\Psi(x_i)$, $i\in I,$ has an accumulation point $y\in \Psi(x).$
\end{definition}

\begin{theorem}\label{uscusc}
The following statements hold:
\begin{enumerate}[(i)]
\item  a set-valued mapping $\Psi:\X\to \S(\Y)$ is $\K\N$-upper
semi-compact if and only if it is upper semi-continuous and
compact-valued; \item a $\K\N$-upper semi-compact set-valued
mapping $\Psi:\X\to \S(\Y)$ is $\K$-upper semi-compact; \item if
$\X$ is a compactly generated topological space and $\Psi:\X\to
\S(\Y)$ is a $\K$-upper semi-compact set-valued mapping, then
$\Psi$ is compact-valued and upper semi-continuous.
\end{enumerate}
\end{theorem}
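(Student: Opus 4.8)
The plan is to establish the three parts in order, relying on two standard facts about nets in topological spaces: a subset of a space is compact if and only if every net contained in it has an accumulation point (equivalently, a convergent subnet) lying in that subset; and if a net lies entirely in a closed set, then all of its accumulation points lie in that closed set, while a subnet of a convergent net converges to the same limit. Hausdorffness of $\X$ will be used only to guarantee uniqueness of limits, and the hypothesis that $\X$ is compactly generated will be invoked only once, at the very end of part (iii).

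\emph{Part (i).} Suppose $\Psi$ is $\K\N$-upper semi-compact. Applying the definition to the constant net $x_i\equiv x$ shows that every net in $\Psi(x)$ has an accumulation point in $\Psi(x)$, so $\Psi(x)$ is compact. For upper semi-continuity, argue by contradiction: if it fails at some $x$, there is an open $\mathcal{G}\supseteq\Psi(x)$ such that every neighborhood $\mathcal{O}$ of $x$ contains a point $x_{\mathcal{O}}$ with $\Psi(x_{\mathcal{O}})\not\subseteq\mathcal{G}$; choose $y_{\mathcal{O}}\in\Psi(x_{\mathcal{O}})\setminus\mathcal{G}$, and direct the neighborhoods of $x$ by reverse inclusion to obtain nets $x_{\mathcal{O}}\to x$ and $y_{\mathcal{O}}\in\Psi(x_{\mathcal{O}})$. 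By $\K\N$-upper semi-compactness $\{y_{\mathcal{O}}\}$ has an accumulation point in $\Psi(x)\subseteq\mathcal{G}$, yet $\{y_{\mathcal{O}}\}$ lies in the closed set $\Y\setminus\mathcal{G}$, so all its accumulation points avoid $\mathcal{G}$ — a contradiction. Conversely, assume $\Psi$ is upper semi-continuous and compact-valued, let $x_i\to x$ and $y_i\in\Psi(x_i)$. If $\{y_i\}$ had no accumulation point in $\Psi(x)$, then for each $y\in\Psi(x)$ there would be an open $V_y\ni y$ and an index $i_y$ with $y_i\notin V_y$ for all $i\ge i_y$; compactness of $\Psi(x)$ gives a finite subcover $V_{y_1},\dots,V_{y_n}$, so $\mathcal{G}:=\bigcup_k V_{y_k}$ is an open neighborhood of $\Psi(x)$, and upper semi-continuity yields a neighborhood of $x$ on which $\Psi(\cdot)\subseteq\mathcal{G}$, forcing $y_i\in\mathcal{G}$ for large $i$; but for $i$ past an upper bound of $i_{y_1},\dots,i_{y_n}$ we have $y_i\notin\mathcal{G}$ — again a contradiction.

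\emph{Part (ii).} Fix $K\in\K(\X)$ and a net $\{(x_i,y_i)\}_{i\in I}$ in $\graph_K(\Psi)$. Compactness of $K$ produces a subnet $\{x_{i_j}\}_{j\in J}$ with $x_{i_j}\to x\in K$; the matching subnet satisfies $y_{i_j}\in\Psi(x_{i_j})$, so $\K\N$-upper semi-compactness applied to these two nets over the common directed set $J$ yields an accumulation point $y\in\Psi(x)$ of $\{y_{i_j}\}$, hence a further subnet of $\{y_{i_j}\}$ converging to $y$ whose associated subnet of $\{x_{i_j}\}$ still converges to $x$. Thus $(x,y)\in\graph_K(\Psi)$ is an accumulation point of $\{(x_i,y_i)\}$, so $\graph_K(\Psi)$ is compact, i.e. $\Psi$ is $\K$-upper semi-compact.

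\emph{Part (iii).} Compact-valuedness is immediate: $\{x\}\in\K(\X)$, so $\graph_{\{x\}}(\Psi)=\{x\}\times\Psi(x)$ is compact, hence so is $\Psi(x)$. For upper semi-continuity, suppose it fails at some $x_0$: there is an open $\mathcal{G}\supseteq\Psi(x_0)$ such that the ``bad set'' $A:=\{x\in\X:\Psi(x)\not\subseteq\mathcal{G}\}$ satisfies $x_0\in\overline{A}\setminus A$, so $A$ is not closed. Since $\X$ is compactly generated, there is $K\in\K(\X)$ with $A\cap K$ not closed in $K$, hence a point $x_1\in\overline{A\cap K}\cap K$ with $x_1\notin A$, so $\Psi(x_1)\subseteq\mathcal{G}$; pick a net $\{x_i\}$ in $A\cap K$ with $x_i\to x_1$ and $y_i\in\Psi(x_i)\setminus\mathcal{G}$, so that $\{(x_i,y_i)\}$ lies in the compact set $\graph_K(\Psi)$. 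A convergent subnet has a limit $(x',y')\in\graph_K(\Psi)$; Hausdorffness of $\X$ gives $x'=x_1$, so $y'\in\Psi(x_1)\subseteq\mathcal{G}$, while $\{y_i\}$ lies in the closed set $\Y\setminus\mathcal{G}$, forcing $y'\in\Y\setminus\mathcal{G}$ — a contradiction. I expect this last argument to be the main obstacle: the substantive step is to convert a global failure of upper semi-continuity into a failure of closedness of $A$, then use the defining property of compactly generated spaces to localize that failure to one compact $K$, where the compactness of $\graph_K(\Psi)$ established by hypothesis can be contradicted.
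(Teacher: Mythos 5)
Your proof is correct, and for parts (i) and (ii) it is essentially the paper's argument: the same contradiction with a net indexed by the neighborhoods of $x_0$ in the forward direction of (i), and the same subnet extraction over a compact $K$ in (ii); the only difference is that you prove directly (via the finite-subcover argument) the fact the paper simply cites from Aliprantis and Border \cite[Corollary 17.17]{AliBor}. In part (iii) your route diverges mildly: the paper invokes the characterization that $\Psi$ is upper semi-continuous iff $\{x\in\X:\Psi(x)\cap F\neq\emptyset\}$ is closed for every closed $F\subseteq\Y$ \cite[Lemma 17.4]{AliBor}, checks this set's trace on each compact $K$ using upper semi-continuity of $\Psi|_K$ (which follows from compactness of $\graph_K(\Psi)$), and then lets the compactly generated property globalize closedness; you instead argue by contradiction from the neighborhood definition, showing a failure of upper semi-continuity makes the ``bad set'' $A=\{x:\Psi(x)\not\subseteq\mathcal{G}\}$ non-closed, using compact generation to localize this to a single $K$, and contradicting compactness of $\graph_K(\Psi)$ with a net (Hausdorffness of $\X$ pinning the first coordinate). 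The two organizations are contrapositives of the same idea; yours is fully self-contained and avoids the closed-preimage criterion, while the paper's is shorter by leaning on the cited lemmas. No gaps.
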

\begin{proof}
(i) Let $\Psi:\X\to \S(\Y)$ be a $\K\N$-upper semi-compact
set-valued mapping. Restricting attention to constant nets
$\{x_i\}_{i \in I}$, it follows that $\Psi$ takes compact values.
Let us prove that $\Psi$ is upper semi-continuous. Suppose, to the
contrary, that $\Psi$ is not upper semi-continuous at some point
$x_0 \in \X$. Then there is an open neighborhood $V$ of
$\Psi(x_0)$ such that for every neighborhood $U$ of $x_0$, there
is an $x_U \in U$ with $\Psi(x_U) \not\subseteq V$. In particular,
we can select a $y_U \in \Psi(x_U) \setminus V$. Now consider the
nets $\{x_U: U \in I\}$ and $\{y_U: U \in I\}$, where $I$ is the
directed set of neighborhoods of $x_0$. The net $\{x_U\}$
converges to $x_0$. Since $\Psi$ is $\K\N$-upper semi-compact, the
net $\{y_U\}$ has an accumulation point $y_0 \in \Psi(x_0)\subseteq V$.
The net $\{y_U\}$ lies in the
closed set $V^c$, which is the complement of $V$, and therefore $y_0\in V^c$.
This contradiction implies that
$\Psi:\X\to \S(\Y)$ is upper semi-continuous.

Vice versa, let $\Psi:\X\to \K(\Y)$ be upper semi-continuous, let
$\{x_i \}_{i\in I}$ be a convergent net with values in $\X$ whose
limit $x$ belongs to $\X$ and $\{y_i\}_{i\in I}$ be a net defined on
the same ordered set $I$ with $y_i\in \Psi(x_i)$, $i\in I$.
Aliprantis and Border \cite[Corollary 17.17, p. 564]{AliBor} yields
that the net $\{y_i\}_{i\in I}$ has an accumulation point $y\in
\Phi(x)$. Therefore, the function $u(\cdot,\cdot)$ is
$\K\N$-inf-compact on ${\rm Gr}_\X ({{\Phi}})$.

(ii) Let $\Psi:\X\to \S(\Y)$ be a $\K\N$-upper semi-compact
set-valued mapping. This mapping is $\K$-upper semi-compact, because
its restriction to any compact set $K$ of $\X$ is $\K\N$-upper
semi-compact and its graph $\graph_K(\Psi)$ is compact by virtue of
characterizations of compactness via nets.

(iii) Since $\Psi:\X\to \S(\Y)$ is $\K$-upper semi-compact, it is
compact-valued. We prove that $\Psi:\X\to \S(\Y)$ is upper
semi-continuous. Recall (Aliprantis and Border \cite[Lemma 17.4,
p. 559]{AliBor}) that $\Psi$ is upper semi-continuous if, for each
closed subset $F$ of $\Y$, the set
\begin{equation}\label{eq: minimizers usc}
\{x \in \X: \Psi(x) \cap F \neq \emptyset\}
\end{equation}
is closed. Since $\X$ is compactly generated, it suffices to show
for each compact $K \subseteq \X$ that $\Psi|_K$, the restriction
of $\Psi: \X \to \S(\Y)$ to the domain $K$, is upper
semi-continuous: for each closed subset $F$ of $\Y$,  
\[
\{x \in K: \Psi|_K(x) \cap F \neq \emptyset\} = \{x \in \X: \Psi(x)
\cap F \neq \emptyset\} \cap K
\]
is closed and consequently, that the set in \eqref{eq: minimizers
usc} is closed.

So let $K \in \K(\X)$. Since $\Psi|_K$ is compact-valued, its
upper semi-continuity follows from compactness of
$\graph_K(\Psi)$, that is, for every net $(x_{\alpha},
y_{\alpha})$ in $\graph_K(\Psi)$ with $x_{\alpha} \to x$ for some
$x \in K$, that net $(y_{\alpha})$ has a limit point in
$\Psi|_K(x)$, that is, a convergent subnet with limit $y \in
\Psi|_K(x)$; see Aliprantis and Border \cite[Corollary 17.17, p.
564]{AliBor}.
\end{proof}

Theorem~\ref{uscusc} implies the following statement.

\begin{corollary}\label{cor:usc}
Let $\X$ be a compactly generated topological space. A set-valued
mapping $\Psi:\X\to \S(\Y)$ is $\K$-upper semi-compact if and only
if it is $\K\N$-upper semi-compact.
\end{corollary}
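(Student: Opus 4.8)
The plan is to obtain the corollary by simply chaining together the three parts of Theorem~\ref{uscusc}; no genuinely new argument is required. Since the claim is an equivalence, I would establish the two implications separately.

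First, suppose $\Psi:\X\to\S(\Y)$ is $\K\N$-upper semi-compact. Then Theorem~\ref{uscusc}(ii) immediately gives that $\Psi$ is $\K$-upper semi-compact, and this direction does not even use that $\X$ is compactly generated.

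Conversely, suppose $\Psi:\X\to\S(\Y)$ is $\K$-upper semi-compact, and recall that $\X$ is compactly generated. Then Theorem~\ref{uscusc}(iii) yields that $\Psi$ is compact-valued and upper semi-continuous, and Theorem~\ref{uscusc}(i) then gives that $\Psi$ is $\K\N$-upper semi-compact. Combining the two implications proves the corollary.

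Since every step is a direct appeal to an already-established part of Theorem~\ref{uscusc}, I do not expect any obstacle at this stage. The only place where the compact-generation hypothesis enters is through Theorem~\ref{uscusc}(iii), whose proof is the substantive ingredient: there one reduces upper semi-continuity of $\Psi$ on all of $\X$ to upper semi-continuity of each restriction $\Psi|_K$, $K\in\K(\X)$, using the closedness criterion for upper semi-continuous maps together with the defining property of compactly generated spaces, and then deduces upper semi-continuity of $\Psi|_K$ from compactness of $\graph_K(\Psi)$ via the net characterization of upper semi-continuity for compact-valued mappings.
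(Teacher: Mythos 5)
Your proof is correct and matches the paper's intended argument: the paper derives the corollary directly from Theorem~\ref{uscusc}, with part (ii) giving one implication and parts (iii) and (i) combined giving the converse under the compactly generated hypothesis, exactly as you do.
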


\subsection{Proof of Theorem~\ref{kf}}

\begin{proof}[Proof of Theorem~\ref{kf}]
(i) Let $u:\X\times \Y\to \overline{\mathbb{R}}$ be
$\K\N$-inf-compact on ${\rm Gr}_{\X}(\Phi)$, $K\in\K(\X)$, and
$\lambda\in \mathbb{R}$. Prove that the level set
$\mathcal{D}_{u(\cdot,\cdot)}(\lambda;{\rm Gr}_K(\Phi))$ is
compact, that is, any net $\{(x_i,y_i)\}_{i\in I}$ with values in
$\mathcal{D}_{u(\cdot,\cdot)}(\lambda;{\rm Gr}_K(\Phi))$ has an
accumulation point $(x,y)\in
\mathcal{D}_{u(\cdot,\cdot)}(\lambda;{\rm Gr}_K(\Phi))$. Indeed,
condition (ii) of Definition~\ref{def:F-ic} and compactness of $K$
implies that a net $\{(x_i,y_i)\}_{i\in I}\subset
\mathcal{D}_{u(\cdot,\cdot)}(\lambda;{\rm Gr}_K(\Phi))$ has an
accumulation point $(x,y)\in {\rm Gr}_{\X}(\Phi)$. Condition (i)
of Definition~\ref{def:F-ic} yields that the set
$\mathcal{D}_{u(\cdot,\cdot)}(\lambda;{\rm Gr}_K(\Phi))$ is
closed, that is, $(x,y)\in
\mathcal{D}_{u(\cdot,\cdot)}(\lambda;{\rm Gr}_K(\Phi))$.
Therefore, the function $u(\,\cdot\,,\,\cdot\,)$ is
$\K$-inf-compact on ${\rm Gr}_{\X}(\Phi)$.

(ii) Let $\X$ be a compactly generated topological space and
$u:\X\times \Y\to \overline{\mathbb{R}}$ be $\K$-inf-compact on
${\rm Gr}_{\X}(\Phi)$. Fix an arbitrary $\lambda\in\mathbb{R}$.
According to the definition of $\K\N$-inf-compactness, it is
sufficient to prove that: (a) the set
$\mathcal{D}_{u(\cdot,\cdot)}(\lambda;{\rm Gr}_\X(\Phi))$ is
closed, and (b) for any convergent net $\{x_i \}_{i\in I}$ with
values in $\X$ whose limit $x$ belongs to $\X$, any net
$\{y_i\}_{i\in I},$ defined on the same ordered set $I$ with
$y_i\in \Phi(x_i)$, $i\in I,$ and satisfying the condition that
the set $\{u(x_i,y_i): i\in I \}$ is bounded above by $\lambda$,
has an accumulation point $y\in \Phi(x).$

Set $\bar{\Y}:=\Y\cup\{a\}$, where $a$ is a subset of $\Y$ such
that $a\notin \Y$ (such set exists according to Cantor's theorem).
A subset $\mathcal{O}\subseteq \bar{\Y}$ is called open in
$\bar{\Y}$, if $\mathcal{O}\setminus\{a\}$ is open in $\Y$. Note
that the point $a$ is isolated and a set $\mathcal{K}\subseteq
\bar{\Y}$ is open (closed, compact) in $\bar{\Y}$ if and only if
the set $\mathcal{K}\setminus\{a\}$ is open (closed, compact
respectively) in $\Y$. Therefore, the topological space
$\bar{\Y}$, endowed with such topology of open subsets
$\mathcal{O}$, is  Hausdorff.

According to Theorem~\ref{uscusc}, the set-valued mapping
$\Phi_\lambda:\X\to \S(\bar{\Y})$,
\[
\Phi_\lambda(x):=\{y\in\Phi(x)\,:\, u(x,y)\le
\lambda\}\cup\{a\},\quad x\in \X,
\]
is compact-valued, upper semi-continuous, $\K$-upper semi-compact,
and $\K\N$-upper semi-compact, because the topological space $\X$ is
compactly generated and for every $K\in \K(\X)$ the set
\[
{\rm Gr}_K(\Phi_\lambda)=\mathcal{D}_{u(\cdot,\cdot)}(\lambda;{\rm
Gr}_K(\Phi))\cup(K\times\{a\})
\]
is compact in $\X\times \bar{\Y}$.

Since $\Phi_\lambda:\X\to \K(\bar{\Y})$ is upper semi-continuous,
the set ${\rm Gr}_\X(\Phi_\lambda)$ is closed in $\X\times\bar{\Y}$.
Therefore, the set $\mathcal{D}_{u(\cdot,\cdot)}(\lambda;{\rm
Gr}_\X(\Phi))={\rm Gr}_\X(\Phi_\lambda)\setminus (\X\times\{a\})$ is
closed in $\X\times \bar{\Y}$ and in $\X\times\Y$, because the set
$\X\times\{a\}$ is open and closed simultaneously in $\X\times
\bar{\Y}$ and ${\rm Gr}_\X(\Phi_\lambda)\cap
(\X\times\{a\})=\emptyset$. Statement (a) is proved.

Statement (b) follows from $\K\N$-upper semi-compactness of
$\Phi_\lambda:\X\to \K(\bar{\Y})$. Indeed, if $\{x_i \}_{i\in I}$
is a convergent net with values in $\X$ whose limit $x$ belongs to
$\X$, and $\{y_i\}_{i\in I}$ is a net defined on the same ordered
set $I$ with $y_i\in \Phi(x_i)$, $i\in I,$ and satisfying the
condition that the set $\{u(x_i,y_i): i\in I \}$ is bounded above
by $\lambda$, then this net has an accumulation point $y\in
\Phi(x)$, because $y_i\neq a$ for any $i\in I$ and $a$ is an
isolated point in $\bar{\Y}$. Statement (b) is proved. Since
statements (a) and (b) hold for any real $\lambda$, the function
$u$ is $\K\N$-inf-compact on ${\rm Gr}_{\X}(\Phi)$.
\end{proof}

\section{Properties of $\K$-Inf-Compact and $\K\N$-Inf-Compact Functions}\label{F}

The following theorems state some properties of $\K$-inf-compact
functions.
\begin{theorem}\label{thm: some properties of value function and minimizers}
If a function $u:\X\times \Y\to \overline{\mathbb{R}}$ is
$\K$-inf-compact on $\graph_{\X}(\Phi)$ then:
\begin{enumerate}[(a)]
\item\label{nonempty-valued} For each $x \in \X$, the set
$\Phi^*(x)$ is nonempty. \item\label{compact-valued} If $v(x) = +
\infty$, then $\Phi^*(x) = \Phi(x)$. If $v(x) < + \infty$, then
$\Phi^*(x)$ is compact. \item\label{lsc on compact sets} For each
$K \in \K(\X)$, the restriction $v|_K: K \to \overline{\R}$ is
lower semi-continuous. \item\label{lsc if compactly generated} If
$\X$ is compactly generated, the function $v: \X \to
\overline{\R}$ is lower semi-continuous.
\end{enumerate}
\end{theorem}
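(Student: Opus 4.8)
The plan is to prove the four statements in order, exploiting the fact that $\K$-inf-compactness localizes everything to compact subsets $K\in\K(\X)$, where inf-compactness of $u$ on $\graph_K(\Phi)$ gives us compact level sets to work with.

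For part (a), fix $x\in\X$ and note that $\{x\}\in\K(\X)$, so $u(x,\cdot)$ is inf-compact on $\Phi(x)=\graph_{\{x\}}(\Phi)$. If $v(x)=+\infty$ then $u(x,y)=+\infty$ for all $y\in\Phi(x)$, so $\Phi^*(x)=\Phi(x)\neq\emptyset$. If $v(x)<+\infty$, pick $\lambda>v(x)$; the level set $\mathcal{D}_{u(x,\cdot)}(\lambda;\Phi(x))$ is nonempty and compact, and $v(x)$ is the infimum of the lower semi-continuous function $u(x,\cdot)$ (lower semi-continuity on $\Phi(x)$ follows from closedness of these same level sets) over this compact set, hence is attained. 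This simultaneously handles part (b): the first claim is the trivial case just noted, and when $v(x)<+\infty$ we have $\Phi^*(x)=\mathcal{D}_{u(x,\cdot)}(v(x);\Phi(x))$, which is one of the compact level sets.

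For part (c), fix $K\in\K(\X)$ and $\lambda\in\R$; I must show $\{x\in K: v(x)\le\lambda\}$ is closed in $K$. The natural idea is that $v|_K$ is the ``marginal'' (infimum over fibers) of the function $u$ restricted to the compact set $\graph_K(\Phi)$, and infima of inf-compact functions along a projection are lower semi-continuous. Concretely, take a net $\{x_i\}$ in $\{x\in K: v(x)\le\lambda\}$ converging to $x\in K$; for each $i$ choose (via part (a), applied with $\lambda+1$ or directly the minimizer) a point $y_i\in\Phi(x_i)$ with $u(x_i,y_i)\le\lambda$. Then $\{(x_i,y_i)\}$ lies in the compact level set $\mathcal{D}_{u(\cdot,\cdot)}(\lambda;\graph_K(\Phi))$, so it has an accumulation point $(x',y')$ in that set; since $\X$ is Hausdorff and $x_i\to x$, we get $x'=x$, hence $y'\in\Phi(x)$ and $u(x,y')\le\lambda$, giving $v(x)\le\lambda$. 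This closes the level set. The main obstacle here is exactly the subtlety the paper has been flagging: one must pass to a subnet (accumulation point of a net, not a sequence) and then argue that the first coordinate of the accumulation point of $\{(x_i,y_i)\}$ is forced to be $x$ because $\X$ is Hausdorff and $x_i\to x$ already — this is where compactness of $K$ (ensuring $x\in K$ and the net stays in a compact set) is used rather than any claim about $(\cup_i\{x_i\})\cup\{x\}$ being compact.

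For part (d), assume $\X$ is compactly generated and fix $\lambda\in\R$. By part (c), for every $K\in\K(\X)$ the set $\mathcal{D}_{v}(\lambda;\X)\cap K=\{x\in K: v|_K(x)\le\lambda\}$ is closed in $K$. By the defining property of a compactly generated space, a set whose intersection with every compact $K$ is closed in $K$ is itself closed in $\X$; hence $\mathcal{D}_v(\lambda;\X)$ is closed. Since $\lambda$ was arbitrary, $v$ is lower semi-continuous on $\X$. This last step is immediate once (c) is in hand, so the real work is all in part (c).
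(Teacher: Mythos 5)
Your proposal is correct and follows essentially the same route as the paper: the compact-level-set/attainment argument for (a)--(b) (the paper phrases it via the finite intersection property of the sets $\mathcal{D}_{u(\cdot,\cdot)}(\lambda;\graph_{\{x\}}(\Phi))$, which is the same mechanism behind ``a lower semi-continuous function attains its infimum on a compact set''), nets of minimizers inside the compact set $\mathcal{D}_{u(\cdot,\cdot)}(\lambda;\graph_K(\Phi))$ with the Hausdorff identification of the first coordinate for (c), and the defining property of compactly generated spaces for (d). The only small patch needed is in (b) when $v(x)=-\infty$: there $\Phi^*(x)$ is not literally a level set $\mathcal{D}_{u(x,\cdot)}(\lambda;\Phi(x))$ with real $\lambda$, but it equals $\bigcap_{\lambda\in\R}\mathcal{D}_{u(x,\cdot)}(\lambda;\Phi(x))$, a closed subset of a compact set, hence compact --- which is exactly how the paper treats both cases at once.
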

\begin{proof}
(\ref{nonempty-valued}), (\ref{compact-valued}). Let $x \in \X$.
If $v(x) = \inf_{y \in \Phi(x)} u(x,y) = + \infty$, then $u(x,y) =
+ \infty$ for all $y \in \Phi(x)$. Hence $\Phi^*(x) = \Phi(x)$ is
nonempty. Next, let $v(x) \in \R \cup \{- \infty\}$ and let
$\lambda \in (v(x), + \infty)$. Then $\lambda$ belongs to $\R$ and
$\mathcal{D}_{u(\cdot, \cdot)}(\lambda; \graph_{\{x\}}(\Phi))$ is
nonempty  and compact.  The former follows from  $v(x) < \lambda$,
and the latter follows from $\K$-inf-compactbess of $u$ applied to
the compact set $K=\{x\}.$  So, for $\lambda\in (v(x), + \infty)$,
the level sets $\mathcal{D}_{u(\cdot, \cdot)}(\lambda;
\graph_{\{x\}}(\Phi))$  are nonempty, compact, and have the finite
intersection property. Hence, their intersection
\[
\cap_{\lambda \in (v(x), + \infty)} \mathcal{D}_{u(\cdot, \cdot)} (\lambda, \graph_{\{x\}}(\Phi)) = \{(x,y) \in \graph_{\{x\}} (\Phi): u(x,y) \leq v(x)\} = \{x\} \times \Phi^*(x)
\]
is nonempty and compact. A fortiori, the projection $\Phi^*(x)$ onto $\Y$ is nonempty and compact.

 (\ref{lsc on compact sets}). Let $K \in \K(\X)$ and $\lambda \in \R$. To show that
$\mathcal{D}_{v(\cdot)}(\lambda; K) = \{x \in K: v(x) \leq
\lambda\}$ is closed, consider a convergent net $x_{\alpha} \to x$
in $K$ with $v(x_{\alpha}) \leq \lambda$ for all $\alpha$. By
(\ref{nonempty-valued}), there exists, for each $\alpha$, some
$y_{\alpha} \in \Phi(x_{\alpha})$ with $u(x_{\alpha}, y_{\alpha})
= v(x_{\alpha}) \leq \lambda$. So the net $(x_{\alpha},
y_{\alpha})$ belongs to $\mathcal{D}_{u(\cdot, \cdot)} (\lambda;
\graph_K (\Phi))$, which is compact by $\K$-inf-compactness.
Consequently, it has a convergent subnet (without loss of
generality, the original net) with the limit $(x,y) \in
\mathcal{D}_{u(\cdot, \cdot)} (\lambda; \graph_K (\Phi))$. In
particular, $y \in \Phi(x)$ and $v(x) \leq u(x,y) \leq \lambda$,
as we had to show.

 (\ref{lsc if compactly generated}). Let $\X$ be
compactly generated and let $\lambda \in \R$. By (\ref{lsc on
compact sets}), $ \mathcal{D}_{v(\cdot)}(\lambda; K) =
\mathcal{D}_{v(\cdot)}(\lambda; \X) \cap K $ is closed for each $K
\in \K(\X)$. Therefore, $\mathcal{D}_{v(\cdot)}(\lambda; \X)$ is
closed. \end{proof}
\begin{corollary}\label{zcor1} {\rm(cf.  Feinberg and Lewis
\cite[Proposition~3.1]{FL}, Feinberg et al.
\cite[Corollary~3.2]{Feinberg et al})} If a function
$u:\X\times\Y\to\overline{\mathbb{R}}$ is inf-compact on ${\rm
Gr}_{\X}(\Phi)$, then the function $v:\X\to\overline{\R}$ is
inf-compact and the conclusions of Theorem \ref{zMT1} hold.
\end{corollary}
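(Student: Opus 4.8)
The plan is to deduce the corollary from Theorem~\ref{zMT1}, Theorem~\ref{thm: some properties of value function and minimizers}, and the elementary fact that a continuous image of a compact set is compact; the only substantive input of the hypothesis is that inf-compactness of $u$ makes the relevant level sets compact.

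First I would observe that inf-compactness of $u$ on $\graph_\X(\Phi)$ implies $\K\N$-inf-compactness of $u$ on $\graph_\X(\Phi)$. Condition (i) of Definition~\ref{def:F-ic} holds because each level set $\mathcal{D}_{u(\cdot,\cdot)}(\lambda;\graph_\X(\Phi))$ is compact, hence closed, in the Hausdorff space $\X\times\Y$. For condition (ii), take a convergent net $\{x_i\}_{i\in I}$ with limit $x\in\X$ and a net $\{y_i\}_{i\in I}$ with $y_i\in\Phi(x_i)$ such that $u(x_i,y_i)\le\lambda$ for all $i$; then $\{(x_i,y_i)\}_{i\in I}$ lies in the compact set $\mathcal{D}_{u(\cdot,\cdot)}(\lambda;\graph_\X(\Phi))$, so it has an accumulation point $(x',y')$ there. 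Since an accumulation point of a convergent net in a Hausdorff space coincides with the limit, $x'=x$, hence $(x,y')\in\graph_\X(\Phi)$, that is, $y'\in\Phi(x)$, and $y'$ is an accumulation point of $\{y_i\}_{i\in I}$, as required. By Theorem~\ref{kf}(i), $u$ is then also $\K$-inf-compact on $\graph_\X(\Phi)$, so all the hypotheses of Theorem~\ref{zMT1} are satisfied and its conclusions hold.

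Next I would prove that $v$ is inf-compact on $\X$. Fix $\lambda\in\R$ and let $\pi:\X\times\Y\to\X$ denote the projection. I claim $\mathcal{D}_{v(\cdot)}(\lambda;\X)=\pi\bigl(\mathcal{D}_{u(\cdot,\cdot)}(\lambda;\graph_\X(\Phi))\bigr)$. If $v(x)\le\lambda$, then $v(x)<+\infty$, so by Theorem~\ref{thm: some properties of value function and minimizers}(\ref{nonempty-valued}),(\ref{compact-valued}) there is $y\in\Phi^*(x)\subseteq\Phi(x)$ with $u(x,y)=v(x)\le\lambda$, whence $x\in\pi\bigl(\mathcal{D}_{u(\cdot,\cdot)}(\lambda;\graph_\X(\Phi))\bigr)$; conversely, if $(x,y)\in\mathcal{D}_{u(\cdot,\cdot)}(\lambda;\graph_\X(\Phi))$, then $y\in\Phi(x)$ and $v(x)\le u(x,y)\le\lambda$. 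Thus $\mathcal{D}_{v(\cdot)}(\lambda;\X)$ is the continuous image of the compact set $\mathcal{D}_{u(\cdot,\cdot)}(\lambda;\graph_\X(\Phi))$ and is therefore compact; since $\lambda\in\R$ was arbitrary, $v$ is inf-compact on $\X$ (and, a fortiori, lower semi-continuous, compact sets being closed in $\X$).

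I do not expect a genuine obstacle: inf-compactness of $u$ supplies exactly the missing ingredient—compactness of the level sets—which feeds both the verification of $\K\N$-inf-compactness and the projection identity for $v$. The one point deserving care is the net bookkeeping in the second paragraph, namely that an accumulation point of $\{(x_i,y_i)\}$ whose first coordinates converge to $x$ has first coordinate $x$; this is where the Hausdorff property of $\X$ enters, and it is what separates this reasoning from the classical Berge argument phrased in terms of sequences.
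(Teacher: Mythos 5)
Your proof is correct, and its core step is exactly the paper's: the identity $\mathcal{D}_{v(\cdot)}(\lambda;\X)=\pi\bigl(\mathcal{D}_{u(\cdot,\cdot)}(\lambda;\graph_\X(\Phi))\bigr)$, justified by attainment of the infimum via Theorem~\ref{thm: some properties of value function and minimizers}, together with compactness of continuous images of compact sets, giving inf-compactness of $v$. The only divergence is peripheral: to get ``the conclusions of Theorem~\ref{zMT1}'' you first re-derive Lemma~\ref{lm0}(ii) (inf-compactness of $u$ implies $\K\N$-inf-compactness, using uniqueness of net limits in the Hausdorff space $\X$) and then invoke Theorem~\ref{zMT1}, whereas the paper deliberately keeps this corollary independent of Theorem~\ref{zMT1}: lower semi-continuity of $v$ follows at once from inf-compactness of $v$, since compact level sets are closed, and the authors stress that their proof does not use that theorem because the corresponding statement in the earlier paper is the one that needed correction. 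Both routes are sound; yours simply introduces a dependence on Theorem~\ref{zMT1} (and an in-line reproof of Lemma~\ref{lm0}(ii), which you could instead cite) that the paper's shorter argument avoids.
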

\begin{proof}  In view of Theorem~\ref{thm: some properties of value function and
minimizers}, $\Phi^*(x)\ne \emptyset$ and the function $v(x)$ is
defined for all $x\in\X$. For any $\lambda\in\R$, the level set
$\mathcal{D}_{v(\cdot)}(\lambda;\X)$ is compact as the projection
of the compact set $\mathcal{D}_{u(\cdot,\cdot)}(\lambda;{\rm
Gr}_\X(\Phi))$ on $\X$. Thus the function $v(\cdot)$ is
inf-compact.
\end{proof}

For an upper semi-continuous set-valued mapping ${\Phi}:\X\to
\K(\Y)$, the set ${\rm Gr}_\X(\Phi)$ is closed;
Berge~\cite[Theorem 6, p. 112]{Berge}. Therefore, for such $\Phi$,
if a function $u(\cdot,\cdot)$ is lower semi-continuous on
$\X\times\Y$, then it is lower semi-continuous on ${\rm
Gr}_\X(\Phi)$. Thus, Lemma~\ref{lm0}(i) implies that
Theorems~\ref{thm: Berge} and \ref{thm: Berge1} are natural
generalizations of Berge's Maximum Theorem.  
Lemma~\ref{lm0} generalizes  \cite[Lemma~2.1]{Feinberg et al}.

\begin{lemma}\label{lm0}
The following statements hold:

(i) if $u:\X\times\Y\to \overline{\mathbb{R}}$ is lower
semi-continuous on ${\rm Gr}_\X ({{\Phi}})$ and ${\Phi}:\X\to
\K(\Y)$ is upper semi-continuous, then the function
$u(\cdot,\cdot)$ is $\K\N$-inf-compact on ${\rm Gr}_\X
({{\Phi}})$;

(ii) if $u:\X\times\Y\to \overline{\mathbb{R}}$ is inf-compact on ${\rm Gr}_\X(\Phi)$, then the function $u(\cdot,\cdot)$ is $\K\N$-inf-compact on ${\rm
Gr}_\X ({{\Phi}})$ and, therefore, it is $\K$-inf-compact on ${\rm Gr}_\X(\Phi)$.
\end{lemma}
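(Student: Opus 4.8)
\textbf{Proof proposal for Lemma~\ref{lm0}.}

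The plan is to prove both statements directly from Definition~\ref{def:F-ic}, verifying its two clauses (lower semi-continuity on $\graph_\X(\Phi)$ and the net-accumulation property) in each case. For part (i), clause (i) of the definition is an explicit hypothesis, so only the net-accumulation property needs attention. Given a convergent net $x_i\to x$ in $\X$ and a net $y_i\in\Phi(x_i)$ with $\{u(x_i,y_i):i\in I\}$ bounded above, I would simply invoke upper semi-continuity of $\Phi:\X\to\K(\Y)$ together with a standard characterization of upper semi-continuity for compact-valued mappings via nets --- the paper already cites Aliprantis and Border \cite[Corollary~17.17, p.~564]{AliBor} for exactly this purpose in the proof of Theorem~\ref{uscusc}. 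That corollary yields an accumulation point $y\in\Phi(x)$ of $\{y_i\}$, with no need to use the boundedness of $\{u(x_i,y_i)\}$ at all. In fact, part (i) is an immediate consequence of Theorem~\ref{uscusc}(i): an upper semi-continuous compact-valued $\Phi$ is $\K\N$-upper semi-compact, which gives the accumulation point for \emph{any} net $y_i\in\Phi(x_i)$, a fortiori for those with bounded-above image under $u$.

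For part (ii), I would again check the two clauses. Clause (i): inf-compactness of $u$ on $\graph_\X(\Phi)$ means every level set $\mathcal{D}_{u(\cdot,\cdot)}(\lambda;\graph_\X(\Phi))$ is compact, hence closed (since $\Y$, and so $\X\times\Y$, is Hausdorff), so $u$ is lower semi-continuous on $\graph_\X(\Phi)$. Clause (ii): let $x_i\to x$ in $\X$ and $y_i\in\Phi(x_i)$ with $u(x_i,y_i)\le\lambda$ for some real $\lambda$ and all $i$ (eventually; pass to a tail). Then the net $\{(x_i,y_i)\}$ lies entirely in the compact set $\mathcal{D}_{u(\cdot,\cdot)}(\lambda;\graph_\X(\Phi))$, so it has an accumulation point $(x',y)$ in that set. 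The only extra point to nail down is that $x'=x$: this follows because the projection onto $\X$ is continuous, the net $\{x_i\}$ converges to $x$, and $\X$ is Hausdorff, so the first coordinate of any accumulation point of $\{(x_i,y_i)\}$ must equal $\lim x_i=x$. Hence $y\in\Phi(x)$ is the desired accumulation point, establishing $\K\N$-inf-compactness. The final clause of (ii) --- that $\K\N$-inf-compactness implies $\K$-inf-compactness --- is precisely Theorem~\ref{kf}(i), already proved.

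The argument is essentially routine bookkeeping with nets; there is no serious obstacle. The one place to be slightly careful is the coordinate-matching step in part (ii): an accumulation point of the product net need not have its $\X$-coordinate equal to $x$ unless one argues via continuity of the projection and Hausdorffness of $\X$ (uniqueness of limits). I would state this explicitly rather than gloss over it. Everything else is a direct appeal to: the net-characterization of compactness, the net-characterization of upper semi-continuity for compact-valued maps (Aliprantis--Border), Theorem~\ref{uscusc}(i), and Theorem~\ref{kf}(i).
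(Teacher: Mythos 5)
Your proposal is correct and follows essentially the same route as the paper: both verify the two clauses of Definition~\ref{def:F-ic} directly, using Aliprantis--Border \cite[Corollary~17.17]{AliBor} (equivalently, Theorem~\ref{uscusc}(i)) for part (i) and compactness of the level sets $\mathcal{D}_{u(\cdot,\cdot)}(\lambda;\graph_\X(\Phi))$ together with the net characterization of compactness for part (ii), with the final implication in (ii) given by Theorem~\ref{kf}(i). Your explicit coordinate-matching argument (continuity of the projection plus Hausdorffness of $\X$) is a point the paper's terser proof leaves implicit, and your observation that the boundedness of $\{u(x_i,y_i)\}$ is not needed in part (i) is consistent with the paper, which merely adds the unneeded extra conclusion $u(x,y)\le\lambda$.
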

\begin{proof}
(i) Let $\{x_i \}_{i\in I}$ be a convergent net with values in
$\X$ whose limit $x$ belongs to $\X$ and $\{y_i\}_{i\in I}$ be a
net defined on the same ordered set $I$ with $y_i\in \Phi(x_i)$,
$i\in I,$ and satisfying the condition that the set $\{u(x_i,y_i):
i\in I \}$ is bounded above by $\lambda\in\mathbb{R}$. Let us
prove that a net $\{y_i\}_{i\in I}$ has an accumulation point
$y\in \Phi(x)$ such that $u(x,y)\le \lambda$. Aliprantis and
Border \cite[Corollary 17.17, p. 564]{AliBor} yields that a net
$\{y_i\}_{i\in I}$ has an accumulation point $y\in \Phi(x)$. The
lower semi-continuity of $u$ on $\graph(\Phi)$ implies that
$u(x,y)\le \lambda$. Therefore, the function $u(\cdot,\cdot)$ is
$\K\N$-inf-compact on ${\rm Gr}_\X ({{\Phi}})$.

(ii) The function $u$ is lower semi-continuous on ${\rm Gr}_\X
({{\Phi}})$ because the level set $\mathcal{D}_{u(\cdot, \cdot)}
(\lambda, \graph_{\X}(\Phi))$ is compact and, therefore, it is
closed for any $\lambda\in \mathbb{R}$. Let us prove that for any
convergent net $\{x_i \}_{i\in I}$ with values in $\X$ whose limit
$x$ belongs to $\X$, any net $\{y_i\}_{i\in I}$ defined on the
same ordered set $I$ with $y_i\in \Phi(x_i)$, $i\in I,$ and
satisfying the condition that the set $\{u(x_i,y_i): i\in I \}$ is
bounded above, has an accumulation point $y\in \Phi(x).$ This
holds because the level set $\mathcal{D}_{u(\cdot, \cdot)}
(\lambda, \graph_{\X}(\Phi))$ is compact for any
$\lambda\in\mathbb{R}$ and because of the characterizations of
compactness via nets.
\end{proof}
As explained above, Theorems~1.2,  4.1 and Lemma~2.3 from Feinberg
et al. \cite{Feinberg et al}  are proved  there, in fact, for
$\K\N$-inf-compact functions $u$. In addition, the proofs of
\cite[Theorem~3.1(a) and Corollary~3.2] {Feinberg et al} use
\cite[Theorem~1.2 and Lemma~2.3] {Feinberg et al}. If the space
$\X$ is compactly generated (in particular, a metrizable
topological space is compactly generated), a function $u$ is
$\K\N$-inf-compact
if and only if it is $\K$-inf-compact; Corollary~\ref{cor2.2}. 
Below we state the corrected formulations of Feinberg et al.
\cite[Theorem3~1.2, 3.1(a) and
 Lemma~2.3]{Feinberg et al} for Hausdorff topological spaces $\X$ and $\Y$.  We do not provide the
corrected formulation of Feinberg et al. \cite[Theorem~4.1]{Feinberg et al} because Theorem~\ref{thm: Berge1} is a stronger statement.

\begin{theorem}\label{zMT1} {\rm(cf. Feinberg et
al. \cite[Theorem~1.2]{Feinberg et al})} If the function $u:\X\times
\Y\to \overline{\mathbb{R}}$ is $\K\N$-inf-compact on ${\rm
Gr}_{\X}(\Phi)$, then the function $v:\X\to\overline{\mathbb{R}}$ is
lower semi-continuous.
\end{theorem}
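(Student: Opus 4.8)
The plan is to check lower semi-continuity of $v$ directly from the definition recalled in Section~1: fix $\lambda\in\R$ and show that the level set $\mathcal{D}_{v(\cdot)}(\lambda;\X)=\{x\in\X:\,v(x)\le\lambda\}$ is closed. Before doing so I would record one preliminary fact. By Theorem~\ref{kf}(i), $\K\N$-inf-compactness of $u$ on $\graph_{\X}(\Phi)$ implies that $u$ is $\K$-inf-compact on $\graph_{\X}(\Phi)$, and hence, by Theorem~\ref{thm: some properties of value function and minimizers}(a), the set $\Phi^*(x)$ is nonempty for every $x\in\X$; in other words, the infimum defining $v(x)$ is attained.

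Now let $\{x_i\}_{i\in I}$ be a net in $\mathcal{D}_{v(\cdot)}(\lambda;\X)$ converging to some $x\in\X$. For each $i\in I$ choose $y_i\in\Phi^*(x_i)$, so that $y_i\in\Phi(x_i)$ and $u(x_i,y_i)=v(x_i)\le\lambda$; in particular the set $\{u(x_i,y_i):\,i\in I\}$ is bounded above (by $\lambda$). Condition (ii) of Definition~\ref{def:F-ic} then provides an accumulation point $y\in\Phi(x)$ of the net $\{y_i\}_{i\in I}$. Passing to a subnet $\{(x_{i_\beta},y_{i_\beta})\}$ with $y_{i_\beta}\to y$ --- along which $x_{i_\beta}\to x$ still holds, and still $u(x_{i_\beta},y_{i_\beta})\le\lambda$ --- we obtain a net in $\graph_{\X}(\Phi)$ converging to $(x,y)\in\graph_{\X}(\Phi)$. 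Hence condition (i) of Definition~\ref{def:F-ic}, i.e., lower semi-continuity of $u$ on $\graph_{\X}(\Phi)$, gives $u(x,y)\le\liminf_\beta u(x_{i_\beta},y_{i_\beta})\le\lambda$. Since $y\in\Phi(x)$, this yields $v(x)\le u(x,y)\le\lambda$, so $x\in\mathcal{D}_{v(\cdot)}(\lambda;\X)$. As $\lambda\in\R$ was arbitrary, every level set of $v$ is closed, which is exactly lower semi-continuity of $v$.

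I do not expect a genuine obstacle here; the only point that requires some care is the appeal to attainment of the infimum in choosing $y_i$. If one prefers not to invoke $\Phi^*(x_i)\ne\emptyset$, the argument can instead be run over the directed set $I\times\N$: pick $y_{i,k}\in\Phi(x_i)$ with $u(x_i,y_{i,k})\le\lambda+1/k$, apply $\K\N$-inf-compactness to this net (which is bounded above by $\lambda+1$), and note that a subnet is eventually beyond every index of $I\times\N$, so its $\N$-coordinate tends to $\infty$; lower semi-continuity of $u$ then still forces $u(x,y)\le\lambda$ at the accumulation point $y\in\Phi(x)$.
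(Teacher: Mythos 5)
Your argument is correct and follows essentially the same route the paper intends for this statement: a direct net argument in which minimizers (or near-minimizers) $y_i\in\Phi(x_i)$ along a net $x_i\to x$ with $v(x_i)\le\lambda$ are shown, via condition (ii) of Definition~\ref{def:F-ic}, to have an accumulation point $y\in\Phi(x)$, after which condition (i) gives $u(x,y)\le\lambda$ and hence $v(x)\le\lambda$; this mirrors the proof of Theorem~\ref{thm: some properties of value function and minimizers}(\ref{lsc on compact sets}) and the corrected reading of the proof of Theorem~1.2 in Feinberg et al.~\cite{Feinberg et al}, with the $\K\N$-condition replacing the (net-invalid) compactness of $(\cup_i\{x_i\})\cup\{x\}$. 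Your appeal to Theorem~\ref{kf}(i) and Theorem~\ref{thm: some properties of value function and minimizers}(\ref{nonempty-valued}) for exact minimizers is legitimate (no circularity), and the approximate-minimizer variant you sketch is the version closest to the original proof.
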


\begin{lemma}\label{zlsc} {\rm(cf. Feinberg et
al. \cite[Lemma~2.3]{Feinberg et al})} A $\K\N$-inf-compact function
$u(\cdot,\cdot)$ on ${\rm Gr}_{\X}(\Phi)$ is lower semi-continuous
on ${\rm Gr}_{\X}(\Phi)$.
\end{lemma}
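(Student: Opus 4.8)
The plan is essentially to unwind the definitions, so the proof will be very short. By Definition~\ref{def:F-ic}, a function $u:\X\times\Y\to\overline{\mathbb{R}}$ is $\K\N$-inf-compact on ${\rm Gr}_{\X}(\Phi)$ precisely when it satisfies two conditions, and condition (i) of that definition states exactly that $u(\cdot,\cdot)$ is lower semi-continuous on ${\rm Gr}_{\X}(\Phi)$. Hence the assertion of Lemma~\ref{zlsc} is literally one of the defining clauses of $\K\N$-inf-compactness, and there is nothing further to prove. Equivalently, I would note that, by \eqref{def-D}, lower semi-continuity of $u$ on the subspace ${\rm Gr}_{\X}(\Phi)$ means precisely that for every $\lambda\in\R$ the level set $\mathcal{D}_{u(\cdot,\cdot)}(\lambda;{\rm Gr}_{\X}(\Phi))$ is closed in ${\rm Gr}_{\X}(\Phi)$, which is exactly what condition (i) asserts.

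The only point worth spelling out is \emph{why} the statement is recorded separately at all, since the proof is immediate. In Feinberg et al.~\cite{Feinberg et al} the analogous Lemma~2.3 is formulated for $\K$-inf-compact functions, for which lower semi-continuity is \emph{not} part of the definition and must be derived; that derivation there relies on the claim that the trace of a convergent net together with its limit is a compact set, which is generally false for nets (cf. Example~\ref{exa:Mark}). Building lower semi-continuity explicitly into Definition~\ref{def:F-ic} makes the Hausdorff statement trivial, and, by Corollary~\ref{cor2.2}, for compactly generated (in particular, metrizable) spaces $\X$ it recovers the original Lemma~2.3 of \cite{Feinberg et al}. Thus there is no real obstacle here: the content is purely a matter of comparing the two equivalent phrasings of lower semi-continuity, and the work has already been absorbed into the definition.
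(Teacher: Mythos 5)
Your proposal is correct and matches the paper's treatment: the paper states Lemma~\ref{zlsc} without any separate argument precisely because lower semi-continuity on ${\rm Gr}_{\X}(\Phi)$ is condition (i) of Definition~\ref{def:F-ic}, so the statement is immediate from the definition, exactly as you observe. Your surrounding remarks on why the lemma is recorded at all (as the corrected, $\K\N$-version of the earlier Lemma~2.3, whose original proof used the false claim that a convergent net together with its limit is compact) also agree with the paper's discussion.
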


\begin{proposition}\label{prop3.6} {\rm(cf. Feinberg et
al. \cite[Theorem 3.1(a)]{Feinberg et al})} If the function
$u:\X\times \Y\to \overline{\mathbb{R}}$ is $\K\N$-inf-compact on
${\rm Gr}_{\X}(\Phi)$, then ${\rm Gr}_{\X}(\Phi^*)$ is a Borel
subset of $\X\times\Y.$
\end{proposition}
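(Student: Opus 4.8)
The plan is to show that $\graph_\X(\Phi^*)$ can be written as a countable intersection of sets that are each a countable union of closed sets, hence a Borel set. The key structural fact is that
\[
y\in\Phi^*(x)\quad\Longleftrightarrow\quad y\in\Phi(x)\ \text{and}\ u(x,y)\le v(x),
\]
and, by Theorem~\ref{thm: some properties of value function and minimizers}(\ref{nonempty-valued})--(\ref{compact-valued}), $\Phi^*(x)$ is always nonempty, so the inequality $u(x,y)\le v(x)$ can equivalently be replaced by $u(x,y)\le v(x)+\varepsilon$ tested over a countable set of $\varepsilon$'s, or — more usefully — the condition $v(x)\ge\lambda$ can be used to carve out the graph. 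Concretely, I would first record that $v$ is lower semi-continuous on $\X$ by Theorem~\ref{zMT1} (applicable since $u$ is $\K\N$-inf-compact), so each set $\{x\in\X:\ v(x)\ge\lambda\}$ is the complement of an open set, and in particular $\{x:\ v(x)>\lambda\}$ and $\{x:\ v(x)\le\lambda\}$ are Borel; likewise, by Lemma~\ref{zlsc}, $u$ is lower semi-continuous on $\graph_\X(\Phi)$, so each level set $\mathcal D_{u(\cdot,\cdot)}(\lambda;\graph_\X(\Phi))$ is closed in $\graph_\X(\Phi)$.

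Next I would assemble the decomposition. Fix a countable dense set $Q\subseteq\R$ (say $Q=\mathbb Q$). For $(x,y)\in\graph_\X(\Phi)$ one has $u(x,y)\le v(x)$ if and only if for every $\lambda\in Q$ the implication ``$v(x)>\lambda\ \Rightarrow\ u(x,y)\le\lambda$'' holds, which can be rephrased as
\[
\graph_\X(\Phi^*)=\bigcap_{\lambda\in Q}\Big(\{(x,y):\ v(x)\le\lambda\}\ \cup\ \mathcal D_{u(\cdot,\cdot)}(\lambda;\graph_\X(\Phi))\Big).
\]
(One should double-check the boundary behaviour: if $u(x,y)>v(x)$, density of $Q$ gives a rational $\lambda$ with $v(x)<\lambda<u(x,y)$, so the pair is excluded; if $u(x,y)\le v(x)$, then for every rational $\lambda$ either $v(x)\le\lambda$ or $\lambda<v(x)$, and in the latter case one needs $u(x,y)\le\lambda$, which may fail when $v(x)<u(x,y)$ is false but $\lambda$ sits strictly between — so in fact the cleaner identity uses $v(x)<\lambda$ on the first term. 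I would use $\graph_\X(\Phi^*)=\bigcap_{\lambda\in Q}\big(\{(x,y):v(x)<\lambda\}\cup \mathcal D_{u(\cdot,\cdot)}(\lambda;\graph_\X(\Phi))\big)$, whose verification is the routine point to get exactly right.) Each term in the intersection is the union of an open subset of $\graph_\X(\Phi)$ (the preimage of an open set under the continuous coordinate projection composed with the lsc function $v$, intersected with $\graph_\X(\Phi)$) and a closed subset of $\graph_\X(\Phi)$; hence each term is Borel in $\graph_\X(\Phi)$, and the countable intersection is Borel in $\graph_\X(\Phi)$.

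Finally I would transfer Borel-ness from $\graph_\X(\Phi)$ to $\X\times\Y$. By Theorem~\ref{kf}(i), $u$ is $\K$-inf-compact on $\graph_\X(\Phi)$, and then Theorem~\ref{thm: some properties of value function and minimizers}, applied with the aid of the auxiliary space $\bar\Y$ and the set-valued map $\Phi_\lambda$ as in the proof of Theorem~\ref{kf}(ii), shows that $\graph_K(\Phi^*)$ is a (relatively) closed subset of $\graph_K(\Phi)$ for each $K\in\K(\X)$, and that $\graph_\X(\Phi)$ itself need not be Borel in $\X\times\Y$ — which is precisely why a direct ambient-space argument is delicate. The main obstacle, therefore, is this last transfer step: one cannot simply say ``a Borel subset of a Borel set is Borel,'' because $\graph_\X(\Phi)$ is only assumed to have closed level sets of $u$, not to be Borel. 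I would handle it by working with the $\sigma$-algebra generated by the sets $\{x:v(x)<\lambda\}\times\Y$ together with the closed sets $\mathcal D_{u(\cdot,\cdot)}(\lambda;\graph_\X(\Phi))$ — these closed sets live in $\graph_\X(\Phi)$ but, being level sets of an lsc function, are closed in $\X\times\Y$ as well whenever $u$ is lower semi-continuous on all of $\X\times\Y$; in the general case one instead expresses $\graph_\X(\Phi^*)=\bigcup_n \graph_{K_n}(\Phi^*)$ is unavailable (no $\sigma$-compactness of $\X$), so the correct route is to observe that $v$ being lsc on $\X$ makes $\{(x,y):u(x,y)\le v(x)\}$ expressible, within $\graph_\X(\Phi)$, via the countable family above, and then to note that $\graph_\X(\Phi^*)$ equals $\bigcap_{\lambda\in Q}\big(P^{-1}\{v<\lambda\}\cup \mathcal D_{u}(\lambda;\graph_\X(\Phi))\big)$ where the closed sets $\mathcal D_u(\lambda;\cdot)$, together with their complements, generate a $\sigma$-algebra on $\graph_\X(\Phi)$ that embeds into the Borel $\sigma$-algebra of $\X\times\Y$ precisely because each such level set, by $\K\N$-inf-compactness combined with lower semi-continuity of $u$ on $\graph_\X(\Phi)$ (Lemma~\ref{zlsc}), is closed in $\X\times\Y$. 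Pinning down this closedness-in-the-ambient-space claim is the one place where the $\K\N$-hypothesis (as opposed to mere $\K$-inf-compactness) is essential, and it is the step I would write out most carefully.
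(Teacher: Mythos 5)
Your overall strategy --- write $\graph_\X(\Phi^*)=\{(x,y)\in\graph_\X(\Phi):u(x,y)\le v(x)\}$ as a countable intersection built from level sets of $u$ and of $v$, using Theorem~\ref{zMT1} for lower semi-continuity of $v$ and the $\K\N$-hypothesis to push closedness of the $u$-level sets into the ambient space $\X\times\Y$ --- is the intended one (the paper gives no explicit proof here; it defers to the argument of Feinberg et al.\ for \cite[Theorem 3.1(a)]{Feinberg et al}, which is of exactly this type). However, your central identity is false in both versions you write down: the inequality on the $v$-term points the wrong way. If $y\in\Phi^*(x)$ with $v(x)=u(x,y)$ finite, take any rational $\lambda<v(x)$; then $v(x)\le\lambda$, $v(x)<\lambda$ and $u(x,y)\le\lambda$ all fail, so $(x,y)$ is excluded from the $\lambda$-th term and hence from your intersection. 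In fact $\bigcap_{\lambda\in\mathbb{Q}}\bigl(\{(x,y):v(x)<\lambda\}\cup\mathcal{D}_{u(\cdot,\cdot)}(\lambda;\graph_\X(\Phi))\bigr)$ equals $\{x:v(x)=-\infty\}\times\Y$, which is essentially the opposite of what you want. The correct equivalence is: for $(x,y)\in\graph_\X(\Phi)$, $u(x,y)\le v(x)$ iff for every rational $\lambda$ one has $u(x,y)\le\lambda$ or $v(x)>\lambda$ (i.e.\ the implication ``$v(x)\le\lambda\Rightarrow u(x,y)\le\lambda$'', not your ``$v(x)>\lambda\Rightarrow u(x,y)\le\lambda$''), so the building block must be $\mathcal{D}_{u(\cdot,\cdot)}(\lambda;\graph_\X(\Phi))\cup\bigl(\{x:v(x)>\lambda\}\times\Y\bigr)$, where $\{v>\lambda\}$ is open by Theorem~\ref{zMT1}. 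Note also that with extended values this corrected intersection equals $\graph_\X(\Phi^*)\cup\bigl(\{x:v(x)=+\infty\}\times\Y\bigr)$, so the set where $v=+\infty$ (on which $\Phi^*=\Phi$) requires separate discussion; your write-up tacitly treats $v$ as real-valued.

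Second, the one step that genuinely uses $\K\N$-inf-compactness --- that each $\mathcal{D}_{u(\cdot,\cdot)}(\lambda;\graph_\X(\Phi))$ is closed in $\X\times\Y$, not merely in $\graph_\X(\Phi)$ --- you correctly identify as the crux, but you only assert it; as written, your argument yields Borelness only relative to $\graph_\X(\Phi)$, which, as you yourself observe, need not be Borel in $\X\times\Y$. The missing argument is short and should be written out: if a net $\{(x_i,y_i)\}_{i\in I}$ in $\mathcal{D}_{u(\cdot,\cdot)}(\lambda;\graph_\X(\Phi))$ converges to $(x,y)$ in $\X\times\Y$, then condition (ii) of Definition~\ref{def:F-ic} (the values $u(x_i,y_i)$ are bounded above by $\lambda$) gives an accumulation point $y'\in\Phi(x)$ of $\{y_i\}$; since $\Y$ is Hausdorff and $y_i\to y$, necessarily $y'=y$, so $(x,y)\in\graph_\X(\Phi)$, and condition (i) then gives $u(x,y)\le\lambda$. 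With this lemma and the corrected identity each term of the countable intersection is the union of a closed and an open subset of $\X\times\Y$, and the Borel conclusion follows (modulo the $v=+\infty$ caveat above); without these repairs the proposal does not establish the proposition.
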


%
%

%
%



In addition,  \cite[Corollary 3.2]{Feinberg et al} is restated
above as Corollary~\ref{zcor1} with the   proof that does not use
  \cite[Theorem 1.2]{Feinberg et al}. Example~\ref{exa:Mark} demonstrate that the
  conclusions of Theorem~\ref{zMT1} and Lemma~\ref{zlsc} do not
  hold if $u$ is $\K$-inf-compact on ${\rm Gr}_{\X}(\Phi)$ and
  $\X$ is Hausdorff.

\section{Proofs of Theorems~\ref{thm: Berge} and \ref{thm: Berge1}}

%
%


According to Corollary~\ref{cor2.2}, Theorem~\ref{thm: Berge} is a
direct corollary of Theorem~\ref{thm: Berge1}. 

\begin{proof}[Proof of Theorem~\ref{thm: Berge1}]
The function $u$ is continuous  on $\graph_\X(\Phi)$, because it
is upper semi-continuous  on $\graph_\X(\Phi)$ and, according to
Definition~\ref{def:F-ic}, it is lower semi-continuous on
$\graph_\X(\Phi)$. In view of Theorems~\ref{thm: some properties
of value function and minimizers} and \ref{zMT1}, the value
function $v: \X\to \R$ is continuous and the solution set-valued
mapping $\Phi^*:\X\to \K(\Y)$ is
compact-valued. 
%
Let us show that the solution multi-function $\Phi^*$ is
$\K\N$-upper semi-compact.  Consider a net $\{x_i \}_{i\in I}$
with values in $\X$ whose limit $x$ belongs to $\X$. Then any net
$\{y_i\}_{i\in I},$ defined on the same ordered set $I$ with
$y_i\in \Phi^*(x_i)$, $i\in I,$ has an accumulation point $y\in
\Phi^*(x).$  Indeed, $v(x_i)=u(x_i,y_i)$ for any $i\in I$. Since
$x_i\to x$ and $v$ is continuous, a net $\{v(x_i)\}$ is bounded
above by a finite constant eventually in $I$. Therefore,
$\K\N$-inf-compactness of the function $u$ on $\graph_\X(\Phi)$
implies that the net $\{y_i\}_{i\in I}$ has an accumulation point
$y\in \Phi(x).$ Since the functions $u$ and $v$ are continuous on
$\graph_\X(\Phi)$ and
$\X$ respectively, 
$y\in\Phi^*(x)$. Thus, the solution multifunction $\Phi^*$ is
$\K\N$-upper semi-compact and, in view of Theorem~\ref{uscusc}(i),
it is upper semi-continuous.
\end{proof}

\section{Counterexample}

In the example below, $\X$ is a Hausdorff topological space, $\Y$
is a singleton, $\Phi:\X\to \Y$ is a continuous mapping, $u$ is a
$\K$-inf-compact real-valued function  on $\graph_\X(\Phi)$ such
that: (i) $u$ is not lower semi-continuous on $\X\times \Y$ and
(ii) the value function $v$ is not lower semi-continuous on $\X$
either.


\begin{example}\label{exa:Mark}
{\rm Consider a space $[0, \omega_1]$ of ordinals in the order
topology, with $\omega_1$ the least uncountable ordinal. Each
non-limit ordinal $\alpha$ is an isolated point: $\alpha = 0$ is
isolated, since $\{0\} = [0,1)$ is open. And if $\alpha \neq 0$,
there is a $\beta$ with $\beta + 1 = \alpha$. Hence $\{\alpha\} =
(\beta, \alpha + 1)$ is open. Now let $\X$ be the subspace
consisting of all non-limit ordinals and $\omega_1$.

A set in $\X$ is compact if and only if it is finite. To see why an infinite set $X \subseteq \X$ is not compact, let $C \subseteq X \setminus \{\omega_1\}$ be a countably infinite subset. Identifying each $c \in C$ with its countable set of predecessors $\{x \in [0, \omega_1]: x < c\}$ and using that a countable union of countable sets is countable, it follows that the supremum (union) $s$ of $C$ in $[0, \omega_1]$ is countable and consequently satisfies $s < \omega_1$. Then $(s, \omega_1]$ is an open set containing $\omega_1$. Since $(s, \omega_1]$ fails to cover the infinitely many isolated terms $c \in C$, collection $\{\{x\}: x \in X \setminus \{\omega_1\}\} \cup \{(s, \omega_1]\}$ is a cover of $X$ without a finite subcover.

Now let $\Y = \{y\}$ be a singleton set, $\Phi(x) = \{y\}$ for each $x \in \X$, and define $u: \X \times \Y \to \R$ by $u(x,y) = 0$ if $x \neq \omega_1$ and $u(\omega_1, y) = 1$. Since $\Y$ and each compact subset of $\X$ are finite, $u$ is $\K$-inf-compact. But $u$ is not lower semi-continuous: the set $A$ of non-limit ordinals is directed in its usual order and net $(x_{\alpha}, y)_{\alpha \in A}$ with $x_{\alpha} = \alpha$ converges to $(\omega_1, y)$, yet
\[
\liminf_{\alpha} u(x_{\alpha}, y) = 0 < 1 = u(\omega_1,y),
\]
contradicting lower semi-continuity.

This also shows that the value function $v:x \mapsto \inf_{y \in \Phi(x)} u(x,y)$ need not be lower semi-continuous: in this example, $\Phi(x) = \{y\}$, so $v(x) = u(x,y)$ for all $x \in \X$ so that we have an analogous violation of lower semi-continuity:
\[
\liminf_{\alpha} v(x_{\alpha}) = 0 < 1 = v(\omega_1).
\]
}
\end{example}

\end{document}